\newtheorem{theorem}{Theorem}[section]
\newtheorem{lemma}[theorem]{Lemma}
\newtheorem{proposition}[theorem]{Proposition}
\newtheorem{corollary}[theorem]{Corollary}
\theoremstyle{definition}
\newtheorem{definition}[theorem]{Definition}
\newtheorem{remark}[theorem]{Remark}
\newtheorem{remarks}[theorem]{Remarks}
\newcommand{\N}{{\mathbb{N}}}
\newcommand{\al}{\alpha}
\newcommand{\be}{\beta}
\newcommand{\e}{\varepsilon}
\newcommand{\mc}{\mathcal}
\newcommand{\fa}{f_{\alpha}}
\newcommand{\fb}{f_{\beta}}
\newcommand{\ga}{g_{\alpha}}
\numberwithin{subsection}{section}
\numberwithin{equation}{section}
\newcommand{\norm}[1][\cdot]{\lVert #1\rVert}
\newcommand{\vvert}[1][\cdot]{\vert #1\vert}
\DeclareMathOperator{\supp}{supp}
\DeclareMathOperator{\maxsupp}{maxsupp}
\DeclareMathOperator{\minsupp}{minsupp}
\DeclareMathOperator{\maxran}{maxran}
\DeclareMathOperator{\ran}{range}
\begin{document}
\author{Spiros A. Argyros, Antonis Manoussakis, Anna Pelczar-Barwacz}
\address[S.A. Argyros]
{Department of Mathematics, National Technical University of Athens, Athens 15780, Greece}
\email{sargyros@math.ntua.gr}
\address[A. Manoussakis]{Department of Sciences, Technical University of Crete, GR 73100, Greece}
\email{amanousakis@isc.tuc.gr}
\address[A. Pelczar-Barwacz]{Institute of Mathematics, Jagiellonian University, {\L}ojasiewicza 6, 30-348 Krak\'ow, Poland}
\email{anna.pelczar@im.uj.edu.pl}
\title{A type $(4)$ space in (FR)-classification}
\begin{abstract}
We present a reflexive Banach space with an unconditional basis which is quasi-minimal and tight by range, i.e. of type (4) in Ferenczi-Rosendal list within the framework of Gowers' classification program of Banach spaces. The space is an unconditional variant of the Gowers  Hereditarily Indecomposable space with asymptotically unconditional basis.
\end{abstract}
\subjclass[2000]{46B03}
\keywords{Classification of Banach spaces,  quasi-minimal, tight Banach spaces}
\date{\today}
\maketitle
\section*{Introduction}
In the celebrated papers \cite{G3,G4} W.T. Gowers started his classification program for  Banach spaces. The goal is to identify classes of Banach spaces which are
\begin{itemize}
 \item
hereditary, i.e. if a space belongs to a given class, then all of
its closed infinite dimensional subspaces belong to the same class
as well as well,
 \item inevitable, i.e. any Banach space contains an infinite dimensional subspace in one of those classes,
\item defined in terms of richness of family of bounded operators in the space.
\end{itemize}
The famous Gowers' dichotomy brought the first two classes: spaces
with an unconditional basis and hereditary indecomposable spaces.
Recall that a space is called \textit{hereditarily indecomposable}
(HI) if none of its infinite dimensional subspaces can be written
as a direct sum of two closed infinite dimensional subspaces.

Further classes were defined in terms of the family of
isomorphisms defined in a space. Recall that a Banach space is
minimal if it embeds isomorphically into any of its closed
infinite dimensional subspaces. Relaxing of this notion on obtains
quasi-minimality, which asserts that any two subspaces of a given
space contain further two isomorphic subspace. W.T. Gowers
obtained a dichotomy between quasi-minimality and tightness by
support in \cite{G4}.  The latter notion, among other types of
tightness, was explicity defined and studied in \cite{FR}. Recall
that a subspace $Y$ of a Banach space $X$ with a basis $(e_n)$ is
\textit{tight in} $X$ iff there is a sequence of successive
subsets $I_1<I_2<\dots$ of $\N$ such that the support of any
isomorphic copy of $Y$ in $X$ intersects all but finitely many
$I_n$'s. $X$ is called \textit{tight} if any of its subspaces is
tight in $X$. Adding requirements on the subsets $(I_n)$ with
respect to the given $Y$ one obtains more specific notions, in
particular in \textit{tightness by support} mentioned above the
subsets witnessing tightness of a subspace $Y$ spanned by a block
sequence $(x_n)$ are chosen to be supports of $(x_n)$ \cite{G4}.

V.Ferenczi and C.Rosendal have presented in \cite{FR} further
dichotomies refining Gowers list of classes: the "third dichotomy"
contrasting tightness with minimality and the "forth dichotomy"
between \textit{tightness by range}, where the subsets witnessing
the tightness of a subspace $Y$ spanned by a block sequence
$(x_n)$ are chosen to be ranges of $(x_n)$, with a stronger form
of quasi-minimality, namely sequential minimality. A Banach space
$X$ is \textit{sequentially minimal} if it is quasi-minimal and is
block saturated with block sequences $(x_n)$ with the following
property: any subspace of $X$ contains a sequence equivalent to a
subsequence of $(x_n)$.

The obvious observations relate some of the properties listed
above to HI/uncon\-di\-tio\-nal dichotomy - in particular clearly
any HI space is quasi-minimal and any tight basis is
unconditional. V.Ferenczi and C.Rosendal in \cite{FR2} studied the
spaces already known identifying their properties with respect to
the dichotomies mentioned above. Their study left open two
particular cases. Namely, an HI and sequentially minimal space and
also a quasi-minimal and tight by range space with an
unconditional basis. The answer to the first question was provided
by a version of Gowers-Maurey HI space, as it was proved by
V.Ferenczi and Th.Schlumprecht recently \cite{FS}. We recall now
the list of classes developed in \cite{FR} as stated in \cite{FS},
mentioning also some already known examples.
\begin{theorem}[(FR)-classification]
Any infinite dimensional Banach space contains a subspace from one of the following classes:
\begin{enumerate}
\item HI, tight by range (Gowers space with asymptotically unconditional basis \cite{G2,FR2}),
\item HI, tight, sequentially minimal (a version of Gowers-Maurey space, \cite{FS}),
\item tight by support (Gowers space with unconditional basis \cite{G,FR}),
\item with unconditional basis, tight by range, quasi-minimal (?),
\item with unconditional basis, tight, sequentially minimal (Tsirelson space \cite{FR2}),
\item with unconditional basis, minimal ($\ell_p$, $c_0$, dual to Tsirelson space \cite{CJT}, Schlum\-precht space \cite{AS})
\end{enumerate}
\end{theorem}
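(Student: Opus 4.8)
The statement is a compilation rather than a single-shot theorem: it is obtained by chaining the four dichotomies recalled in the Introduction into a finite decision tree whose leaves are exactly the six classes. The plan is therefore to arrange Gowers' first dichotomy \cite{G3,G4} (unconditional basis versus HI), the third and fourth dichotomies of Ferenczi--Rosendal \cite{FR} (tight versus minimal, and tight by range versus sequentially minimal), and Gowers' quasi-minimality/tightness-by-support dichotomy \cite{G4}, so that each node refines the subspace produced by the previous one. The crucial bookkeeping device is \emph{heredity}: each property in play (being HI, carrying an unconditional basis, being tight, being tight by range) passes to block subspaces, so that a property secured at one node survives the descent to the subspace furnished by the next.

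First I would split by Gowers' first dichotomy, extracting from the given space $X$ a subspace $Y$ that is either HI or has an unconditional basis; this fixes the HI/unconditional coordinate common to the statement. Along the HI branch I would use two facts: every HI space is quasi-minimal (noted in the Introduction), and no HI space is minimal. The latter rests on the scarcity of operators on an HI space --- every operator is a scalar multiple of the identity plus a strictly singular one --- which prevents a proper into-isomorphism of infinite corank and hence rules out minimality. Applying the third dichotomy to $Y$ therefore forces a \emph{tight} HI subspace, to which the fourth dichotomy then applies: the tight-by-range option is class (1), while the sequentially minimal option, still tight and HI by heredity, is class (2). I would also record that quasi-minimality excludes tightness by support, which is why no tight-by-support or minimal leaf appears on this branch.

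Along the unconditional branch I would run the same two Ferenczi--Rosendal dichotomies while carrying the unconditional basis. The third dichotomy yields either a minimal subspace, which is class (6), or a tight subspace; in the tight case the fourth dichotomy yields either a sequentially minimal subspace, class (5), or a tight-by-range subspace. The remaining case is resolved by Gowers' dichotomy \cite{G4}, producing a subspace that is either tight by support --- class (3) --- or quasi-minimal; the latter retains tightness by range by heredity and is class (4). To see that the tree really collapses to these six leaves I would verify the nesting of the hierarchies, namely that tight by support implies tight by range implies tight and that minimal implies sequentially minimal implies quasi-minimal, so that no intermediate combination produces a spurious seventh class.

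All the mathematical depth is concentrated in the four dichotomy theorems, which I take as given. Consequently the genuine work --- and the step that demands care rather than difficulty --- is the combinatorial assembly: checking that the properties are inherited exactly as needed when one dichotomy is applied inside the output of another (so that, for example, HI-ness and tightness are not forfeited when the fourth or the quasi-minimality dichotomy is invoked), and checking the exclusion relations (HI versus minimal, quasi-minimal versus tight by support) that prune the tree down to precisely the inevitable classes (1)--(6).
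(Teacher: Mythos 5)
The paper contains no proof of this statement for you to be compared against: the theorem is quoted purely as background, attributed to Ferenczi--Rosendal \cite{FR} in the formulation of \cite{FS}, and the paper's own contribution is to supply the example missing from entry (4), not to re-derive the classification. What you have reconstructed is, in substance, the derivation given in \cite{FR,FS} itself, and it is correct: Gowers' first dichotomy \cite{G3,G4} fixes the HI/unconditional coordinate; the third dichotomy (minimal versus tight) and the fourth dichotomy (sequentially minimal versus tight by range) of \cite{FR}, together with Gowers' quasi-minimal versus tight-by-support dichotomy \cite{G4}, are then applied in an order under which the heredity facts you invoke (HI passes to all subspaces; an unconditional basis, tightness, and tightness by range pass to block subspaces, the last two being lemmas of \cite{FR}) preserve what each node has secured; and the exclusions (no HI space has a minimal subspace, no quasi-minimal space is tight by support) together with the implication chains you list prune the tree to exactly the six leaves, with the dichotomies understood as producing block subspaces so that, e.g., the minimal subspace found inside the unconditional branch indeed carries an unconditional basis.

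The one inaccuracy is your justification that HI spaces are never minimal. The statement that every operator on an HI space is a scalar multiple of the identity plus a strictly singular operator is the \emph{complex}-scalar theorem; over the reals it can fail, the quotient $\mathcal{L}(X)/\mathcal{S}(X)$ being possibly a larger finite-dimensional division algebra. The fact you actually need --- that an HI space, real or complex, does not embed isomorphically into any of its subspaces of infinite codimension, hence contains no minimal subspace --- follows from the Fredholm index argument of Gowers--Maurey \cite{GM} (strictly singular perturbations preserve the Fredholm property and the index), which does not require the scalar-plus-strictly-singular representation. So this is a citation-level repair rather than a gap in the architecture of your argument.
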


The aim of the present paper is to construct a reflexive space of
type (4) in the above classification. Namely the following is
proven.

\begin{theorem}
There exists a reflexive space $\mathcal{X}_{(4)}$ with an
unconditional basis which is quasi-minimal and tight by range.
\end{theorem}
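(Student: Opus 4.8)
The plan is to realize $\mathcal{X}_{(4)}$ as the completion of $c_{00}$ under a norm induced by a norming set $K\subseteq c_{00}$, constructed by saturation in the spirit of the mixed Tsirelson and Gowers--Maurey schemes, but with $K$ made invariant under changes of signs of coordinates so that the resulting basis is $1$-unconditional. Fix lacunary sequences $(m_j)$ and $(n_j)$ with $m_j\to\infty$ and $n_j$ increasing much faster. I would take $K$ to be the smallest subset of $c_{00}$ containing $\pm e_n^*$, closed under restriction to intervals and under the sign symmetrization $f\mapsto(\varepsilon_nf(n))_n$ for all choices $\varepsilon_n=\pm1$, and closed under the averaging operations $f_1,\dots,f_d\mapsto\frac{1}{m_j}\sum_{i=1}^d f_i$ applied to successive $f_1<\dots<f_d$ in $K$ with $d\le n_j$. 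The sign symmetrization is precisely what forces unconditionality and is the first structural departure from the Gowers HI construction, where the signs carry essential information.

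The heart of the construction is a family of \emph{special functionals}, of the form $\frac{1}{m_{2j+1}}(f_1+\dots+f_d)$ with $(f_i)$ a rapidly increasing sequence whose weights are prescribed by an injective coding $\sigma$ evaluated on the \emph{ranges} $\ran(f_1)<\dots<\ran(f_{i-1})$ of the earlier entries. Because the coding reads ranges rather than supports, the rigidity it creates is attached to the range of a block vector, and this is exactly the feature that should yield tightness by range while leaving room for quasi-minimality; coding on supports, by contrast, produces the stronger rigidity of type $(3)$. With $K$ in hand I would develop the customary machinery: rapidly increasing sequences (RIS), the basic inequality bounding the action of an arbitrary $f\in K$ on an RIS by an auxiliary mixed Tsirelson functional, and its corollary that the sum of an RIS of length $n_j$ has norm of order $n_j/m_j$.

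Granting the basic inequality, reflexivity follows in the standard manner. The lower $\ell_1$-type estimates supplied by the $\frac{1}{m_j}$-operations rule out an isomorphic copy of $c_0$, while the decay $m_j\to\infty$ together with the upper estimate on RIS sums rules out $\ell_1$; by James' theorem a space with an unconditional basis containing neither $c_0$ nor $\ell_1$ is reflexive. For quasi-minimality I would verify that every block subspace contains normalized RIS realizing the extremal estimate $\big\|\sum_i x_i\big\|\approx n_j/m_j$, and that two such sums drawn from any two block subspaces are equivalent, being squeezed on both sides by the same auxiliary norm. Forming the block sequences of these sums then produces, inside any two subspaces, further isomorphic subspaces, which is quasi-minimality.

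The proof that $\mathcal{X}_{(4)}$ is tight by range is where the real work lies, and I expect it to be the main obstacle. Given $Y=[x_n]$ spanned by a block sequence, I would set $I_n=\ran(x_n)$ and suppose toward a contradiction that some normalized block sequence $(z_k)$ equivalent to $(x_n)$ has ranges missing infinitely many $I_n$. Using the coding $\sigma$ one assembles from $(z_k)$ a special functional whose action on a suitable combination of the $z_k$ forces a norm far larger than the equivalence constants allow, contradicting the isomorphism. The delicate point, and the technical core of the paper, is that this rigidity argument must survive the sign symmetrization of $K$: in the HI construction the incompatibility that drives tightness is bound up with the conditional sign structure of the special functionals, and one must check that after passing to a fully symmetric norming set the range coding remains injective and robust enough under the admissible operations to force the contradiction, so that tightness by range is preserved even though indecomposability has been deliberately destroyed.
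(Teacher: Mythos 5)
Your general framework (an unconditional norming set closed under interval projections, mixed Tsirelson averaging operations, special functionals governed by a coding, RIS, a basic inequality, reflexivity via James) matches the paper's, but both substantive proofs have genuine gaps, and the first is fatal to the design of the norming set itself. You have omitted the $G$-operation (property $\zeta)$ of the paper), i.e.\ closure of the norming set under $f\mapsto\frac{1}{2}\chi_{\cup_{p}[n_{2p-1},n_{2p})}f$ for Schreier sets $\{n_1<\dots<n_{2q}\}$. Your set $K$ is essentially the set $W_2$ of the paper's introduction, whose completion $\mathfrak{X}_2$ is quasi-minimal; the Gowers operation is added to $W_2$ precisely because tightness by range is otherwise out of reach. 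It is indispensable for the following reason: the tightness proof takes a block sequence $(x_i)$ and an operator $T$ with $\supp(Tx_i)\cap\ran(x_i)=\emptyset$ and $\ran(x_i+Tx_i)<\ran(x_{i+1}+Tx_{i+1})$, and must manufacture norming functionals that act with a fixed constant on $\ell_1$-averages of many $x_i$ while vanishing on every $Tx_i$. Such a functional is a restriction of a given functional to a union of many intervals, one per $x_i$; sign changes and single interval projections cannot produce it, and only the $G$-operation can, at the cost of the factor $\frac{1}{2}$ and the Schreier condition. Your substitute mechanism --- coding on ranges --- not only fails to fill this role but actively undermines tightness: if the weights in special sequences are determined by ranges alone, then a special functional whose components have the same ranges as the chosen $f_i$ but are supported on the pieces of $\supp(Tx_i)$ lying inside those ranges is equally legitimate, and it norms $\sum_i Tu_i$, destroying the upper estimate one needs. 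This is exactly why the paper codes on the absolute values $|f_1|,\dots,|f_{i-1}|$: equal absolute value forces equal support, which keeps adversarial special functionals away from the $Tx_i$. Relatedly, your account of the type (3)/type (4) divide is off: in the paper it is governed by closure under subset projections and convex combinations (type (3)) versus closure under interval projections only (type (4)), not by what data the coding reads.

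The second gap is quasi-minimality. You claim that normalized RIS sums drawn from any two block subspaces are equivalent, ``being squeezed on both sides by the same auxiliary norm''. If that argument were valid it would apply verbatim to Gowers' unconditional space of type (3), where $\ell_1^n$-averages, RIS and the same auxiliary upper bounds are all available, yet that space is tight by support and hence not quasi-minimal. The auxiliary norm controls each single RIS sum, but not linear combinations $\sum_l a_l u_l$, whose norms are governed by the special (odd-weight) functionals, and a special functional assembled along vectors of $Y$ need not act comparably on vectors of $Z$. The paper's proof works much harder: it takes a dependent sequence $(x_k,f_k)$, splits each $x_k=y_k+z_k$ into interleaved pieces with $y_{k,i}\in Y$, $z_{k,i}\in Z$, and proves (Proposition~\ref{qm1}) that for every $f\in W$ there is $g\in W$ with $|g|=|f|$ and $g(z)\geq\frac{1}{2}f(y)-\frac{62}{m_{2j+1}}$, the functional $g$ being obtained by surgery on the tree-analysis of $f$ (Lemma~\ref{newtree}); that surgery is legitimate exactly because the coding reads absolute values and the norming set is unconditional and closed only under interval projections. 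None of this machinery --- dependent sequences, interleaved splittings, the substitution lemma, and the separate estimates handling the $G$-functionals --- appears in your sketch, and the two halves of your proposal pull against each other: the coarse range-coding you introduce in the hope of getting tightness weakens the very rigidity that any tightness argument requires.
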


As we have mentioned the space $\mathcal{X}_{(4)}$ is the
unconditional version of Gowers HI space which is asymptotically
unconditional \cite{G}. Banach spaces with an unconditional basis
which are variants of HI spaces have occured with Gowers' solution
of the hyperplane problem \cite{G2} and were followed by the most
recent \cite{ABR,ABR2}. Among the features of those spaces
is the non homogeneous structure. For example in all \cite{G2, ABR, ABR2} the spaces are tight by support. The new
phenomenon in the present construction is that the space
$\mathcal{X}_{(4)}$ is quasi-minimal. This is a consequence of the
definition of the norming set $W$, which is slightly different
from the initial Gowers definition, in the following manner.
Starting with an appropriately chosen double sequence $(m_j,
n_j)_j$ we consider the following norming sets $W_1, W_2$.

The set $W_1$ is the smallest subset of $c_{00}(\mathbb{N})$
satisfying
\begin{itemize}

\item[(i)] $W_1$ contains $(e_n)_n$

\item[(ii)] For every $f\in W_1$ and $g\in c_{00}$ with $|g| =
|f|$, then $g\in W_1$

\item[(iii)] It is closed in the projections on the subsets of
$\mathbb{N}$

\item[(iv)] It is closed in the even operations
$(\frac{1}{m_{2j}},\mathcal{A}_{n_{2j}})$

\item[(v)] It is closed in the odd operations
$(\frac{1}{m_{2j+1}},\mathcal{A}_{n_{2j+1}})$ on special sequences
$f_1,f_2,\ldots,f_{n_{2j+1}}$ (Here $f_1,f_2,\ldots,f_{n_{2j+1}}$
is a special sequence if the weight if each $f_i$ is even and for
$1<i$ the weight of $f_i$ is uniquely determined by the sequence
$|f_1|, |f_2|,\ldots,|f_{i-1}|$)

\end{itemize}

Let $\|\cdot\|_1$ be the norm induced on $c_{00}(\mathbb{N})$ by
the set $W_1$ and $\mathfrak{X}_1$ its completion. Then the space
$\mathfrak{X}_1$ is reflexive with a 1-unconditional basis, tight
by support (hence not quasi-minimal) and shares all the properties
of Gowers space \cite{G2}.

Consider next the norming set $W_2$ which satisfies properties
(i), (ii), (iv), (v) of $W_1$ and the following
\begin{itemize}

\item[(iii)$^\prime$] The set $W_2$ is closed in the projections
of its elements on intervals of $\mathbb{N}$.

\end{itemize}

Denoting by $\|\cdot\|_2$ the norm induced by $W_2$ and
$\mathfrak{X}_2$ the corresponding completion, the space
$\mathfrak{X}_2$ is reflexive with a 1-unconditional basis and
quasi-minimal.

This key difference between $\mathfrak{X}_1$ and $\mathfrak{X}_2$
permits the construction of the space $\mathcal{X}_{(4)}$. The
norming set $W$ of the space $\mathcal{X}_{(4)}$ is the smallest
subset of $c_{00}$ satisfying all the properties of the set $W_2$
and an additional one, called ``Gowers operation'' which is used
to show that the space is tight by range.

It is unclear to us what the structure of the space of the
operators $\mathcal{L}(\mathcal{X}_{(4)})$ is. We recall from
\cite{GM2} that every bounded linear operator on $\mathfrak{X}_1$
is of the form $D+S$ with $D$ a diagonal operator and $S$ a
strictly singular one. Such a property seems to fail for the space
$\mathfrak{X}_2$.

We describe now briefly the content of the paper. The first
section is devoted to the definition of the norming set of the
space $\mc{X}_4$. The second section contains the basic
estimations, providing tools to be used in the last two sections
in order to show quasi-minimality and tightness by range of the
space $\mathcal{X}_{(4)}$.

\section{The norming set $W$}
Let us recall the usual basic notation. Let $X$ be a  Banach space with basis $(e_i)$. The \textit{support} of a vector $x=\sum_{i} x_i e_i$ is the set $\supp x =\{ i\in \N : x_i\neq 0\}$, the \textit{range} of $x$ - the minimal interval containing $\supp x$. Given any $x=\sum_{i} a_ie_i$ and finite $E\subset\N$ put $Ex=\sum_{i\in E}a_ie_i$ and $|x|=\sum_i|a_i|e_i$. We write $x<y$ for vectors $x,y\in X$, if $\max\supp x<\min \supp y$. A \textit{block sequence} is any sequence $(x_i)\subset X$ satisfying $x_{1}<x_{2}<\dots$, a \textit{block subspace} of $X$ - any closed subspace spanned by an infinite block sequence. Given a family $\mc{F}$ of finite subsets of $\N$ we say that a block sequence $(x_{i})_{i=1}^{d}$ is $\mc{F}$-\textit{admissible} if  $(\minsupp(x_{i}))_{i=1}^{d}\in\mc{F}$. By the $( \theta,\mc{F})$-\textit{operation}, for $\theta\in (0,1]$, we mean an operation which associates with any $\mc{F}$-admissible sequence $(x_1,\dots,x_d)$ the average $\theta(x_1+\dots+x_d)$.

We define the space $\mathcal{X}_{(4)}$ to be the completion of $(c_{00}(\N)$ under the norm $\norm$ given by some set $W\subset c_{00}(\N)$, described below, as the norming set. (i.e. $\norm[x]=\sup\{f(x):f\in W\}$ for $x\in c_{00}(\N)$).

To define the set  $W$ we fix two sequences of natural numbers $(m_{j})_{j}$ and $(n_{j})_{j}$ defined recursively as follows.

We set $m_{1} =2$ and $m_{j+1}=m_{j}^{5}$ and $n_{1}=4$ and $n_{j+1}=(5n_{j})^{s_{j}}$
where $s_{j}=\log_{2}(m_{j+1}^{3})$, $j\geq 1$. We also fix  a partition of $\N$ into two infinite sets $N_{1},N_{2}$.

The set $W$ is defined to be the smallest subset of $c_{00}(\N)$ satisfying the following properties

$\alpha)$  It is unconditional (i.e. for $f\in W$, $g\in c_{00}(\N)$ with $\vvert[g]=\vvert[f]$ we have $g\in W)$.

$\be)$  It contains $(\pm e_n^*)_n$, where $(e_{n}^{*})_{n}$ is  the usual basis of $c_{00}(\N)$.

$\gamma)$ It is closed on the interval projections.

$\delta)$ It is closed  under the $(\frac{1}{m_{2j}},\mc{A}_{n_{2j}})$-operations (i.e. for every $f_{1}<f_{2}<\dots<f_{n_{2j}}$ in $W$ the functional $f=\frac{1}{m_{2j}}\sum_{i=1}^{n_{2j}}f_{i}\in W$).

$\varepsilon)$  It is closed  under the $(\frac{1}{m_{2j+1}},\mc{A}_{n_{2j+1}})$-operations  on $(2j+1)$-special sequences.

A  sequence $f_{1}<f_{2}<\dots<f_{n_{2q+1}}$ in $W$ is a $(2j+1)$-special sequence if the following are satisfied
\begin{enumerate}
\item $n_{2j+1}<m_{2j_{1}}<\dots<m_{2j_{2j+1}}$,
\item  $w(f_{1})=m_{2j_{1}}$ for some $j_{1}\in N_{1}$,
\item $w(f_i)=m_{2\sigma(|f_1|,\dots,|f_{i-1}|)}$ for any $1<i\leq n_{2j+1}$.
\item For $1< i\leq 2q+1$ the sequence $(\vvert[f_1],\vvert[f_2],\dots,\vvert[f_{i-1}])$ is uniquely determined by $w(f_{i})$
\end{enumerate}
The special sequences can be defined in a similar manner as in \cite{GM},\cite{ATod}, with the use of a coding function $\sigma$.

A functional $f=\frac{1}{m_{2j+1}}\sum_{i=1}^{n_{2j+1}}f_{i}$ with $(f_1,\dots,f_{n_{2j+1}})$ an $(2j+1)$-special sequence is called a $(2j+1)$-special functional. 

$\zeta)$ It is closed under the  $G$-operation, defined as follows.\\
For any set $F=\{n_{1}<\dots<n_{2q}\}\subset\N$ which is Schreier (i.e. $2q\leq n_1$) we set
$$
S_{F}f=\chi_{\cup_{p=1}^{q}[n_{2p-1}, n_{2p})}f.
$$

The $G$-operation associates with any $f\in c_{00}$ the vector $g=\frac{1}{2}S_{F}f$, for any $F$ as above.
\begin{remarks} $(i)$ Clearly the natural basis $(e_{n})_{n}$ is $1$-unconditional in $\mc{X}_4$. Moreover, standard argument shows that $\mc{X}$ is a reflexive space.

$(ii)$ The space $\mc{X}_{(4)}$ is an unconditional  variant of W.T. Gowers, \cite{G},  HI space with an asymptotically unconditional basis. The key ingredient in Gowers construction beyond the standard ones is an operation similar to $\zeta)$.  V.Ferenczi and C.Rosendal, \cite{FR2}, have shown that the Gowers  space is tight by range.

$(iii)$ It is worth pointing out that the quasi-minimal property
of $\mc{X}_{(4)}$ is a result of the fact that the set $W$ is not
closed in rational convex combinations. Indeed if we include the
rational convex combinations in the set $W$ (even if we exclude
property $\zeta)$) we will get a space similar to Gowers space
with an unconditional basis \cite{G2} which is tight by support
and hence not quasi-minimal \cite{FR2}.
\end{remarks}
\subsection{The analysis of a norming functional} As in the previous cases of norming sets defined to be closed under certain operations every functional $f\in W$ admits a tree-analysis which in the present case is described as follows.
\begin{definition}
  \label{treeanal}
 Let $f\in W$. A family $(\fa)_{\al\in \mc{A}}$ with  $\mc{A}$ is a rooted finite tree of finite sequences of $\N$ is a tree-analysis of $f$ if the following are satisfied
 \begin{enumerate}
 \item[1)] $f=f_{0}$ where $0$ denotes the root of $f$.
\item[2)] If $\al$ is maximal element of $\mc{A}$ then $\fa=\e e_{n}^{*}$ for some $\e=1$ or $-1$ and $n\in\N$.\\
If $\al\in\mc{A}$ is not maximal, then one of the following conditions hold
\item[3)] $\fa=\frac{1}{m_{j}}\sum_{\be\in S_{\alpha}}\fb$ where
$\fa=E_{\al}\tilde{f}_{\al}$, $E_{\al}$ interval of $\N$,
$\tilde{f}_{\al}=\frac{1}{m_j}\sum_{i=1}^{n_j}f_i$,
$S_{\al}=\{(a,i):E_{\al}f_{i}\ne 0\}$ and $\fb=E_{\al}f_i$ for
$\be=(\al,i)$. In this case we set the weight $w(\fa)$ of $\fa$ to be $w(\fa)=m_{j}$.
\item[4)] $\fa=\frac{1}{2}S_{F_{\al}}\fb$ with $\be=(\al,1)$, $S_{\al}=\{\be\}$, $F_\al$ Schreier and $\ran(\fa)=\ran(\fb)$.
 \end{enumerate}
\end{definition}
Since the  $(2j+1)$-special sequences $(f_{i})_{i\leq n_{2j+1}}$ is determined by $(\vvert[f_{i}])_{i\leq n_{2j+1}}$ it is easy to see the following
\begin{lemma}
Let $f\in W$ with a tree-analysis $(\fa)_{a\in\mc{A}}$ and $g\in W$ with $\vvert[g]=\vvert[f]$.  Then  $g$ admits a tree analysis $(\ga)_{\al\in\mc{A}}$  such that $\vvert[\ga]=\vvert[\fa]$ for all $\al\in\mc{A}$. In particular if $\fa$ is a weighted functional then $\ga$ is also weighted functional and $w(\fa)=w(\ga)$. If $\fa$ is a $G$-functional, i.e. of the form $\fa=\frac{1}{2}S_{F_\al}f_\beta$ then also $g_\al=\frac{1}{2}S_{F_\al}g_\beta$.
\end{lemma}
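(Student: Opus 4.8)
The plan is to produce a tree-analysis of $g$ on the \emph{same} index tree $\mc{A}$ by a single coordinatewise change of signs. Since $\vvert[g]=\vvert[f]$, I would fix signs $\eta_n\in\{-1,1\}$ with $g(n)=\eta_n f(n)$ for all $n$ (the choice being free on $\N\setminus\supp f$) and let $D_\eta$ be the diagonal operator $\sum_n c_n e_n^*\mapsto\sum_n \eta_n c_n e_n^*$; then set $\ga:=D_\eta\fa$ for $\al\in\mc{A}$ and $g_i:=D_\eta f_i$ for the constituents $f_i$ occurring at the weighted nodes. With this definition $\vvert[\ga]=\vvert[\fa]$ for every $\al$ and $g_0=D_\eta f=g$ come for free. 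Membership also comes for free: $\fa,f_i\in W$ (the latter being the terms of the operation that produces $\tilde f_\al$) and the absolute values are unchanged, so $\ga,g_i\in W$ by the unconditionality $\al)$ of $W$. The real content is therefore not that the $\ga$ lie in $W$, but that they fit together as a genuine tree-analysis carrying the same weights.

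The structural identities of Definition \ref{treeanal} should transfer verbatim, because $D_\eta$ is diagonal and hence commutes with the scalars $\tfrac1{m_j},\tfrac12$, with each interval projection $E_\al$ and with each $S_{F_\al}$: a maximal $\al$ gives $\ga=\pm e_n^*$; a weighted node gives $\ga=\tfrac1{m_j}\sum_{\be\in S_\al}\gb$ with $\ga=E_\al\tilde g_\al$, $\tilde g_\al=\tfrac1{m_j}\sum_i g_i$ and $\gb=E_\al g_i$; a $G$-node gives $\ga=\tfrac12 S_{F_\al}\gb$ with $\ran\ga=\ran\gb$. I would then verify, by induction from the leaves upwards, that each node is produced by a \emph{legal} operation of its declared type, obtaining en route that $w(\ga)=w(\fa)$ at every weighted node. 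Leaves are handled by $\be)$; at a $G$-node the set $F_\al$ is unchanged, hence still Schreier, so $\zeta)$ applies; at an even node legality amounts to $\mc{A}_{n_{2j}}$-admissibility of $(g_i)$, which holds since $\supp g_i=\supp f_i$ keeps the $g_i$ successive and of the same number, so $\de)$ and $\gamma)$ apply and $w(\ga)=w(\fa)$.

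The step I expect to be the main obstacle is the weighted node coming from the odd operation $(\tfrac1{m_{2j+1}},\mc{A}_{n_{2j+1}})$, where legality demands that $(g_i)_{i\le n_{2j+1}}$ be a $(2j+1)$-special sequence. This is precisely where the induction must carry the weights forward: for the children, already processed, one has $\vvert[g_i]=\vvert[f_i]$ and $w(g_i)=w(f_i)$. The point is that the clauses defining a special sequence refer to the $f_i$ only through their weights and their absolute values — clause $(3)$ reads $w(f_i)=m_{2\sigma(\vvert[f_1],\dots,\vvert[f_{i-1}])}$, clause $(4)$ asserts that $w(f_i)$ determines $(\vvert[f_1],\dots,\vvert[f_{i-1}])$, and clauses $(1)$ and $(2)$ constrain only the weights — so, substituting $w(g_i)=w(f_i)$ and $\vvert[g_i]=\vvert[f_i]$, the sequence $(g_i)_i$ satisfies exactly the same clauses and is a $(2j+1)$-special sequence. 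Then $\varepsilon)$ applies, $w(\ga)=w(\fa)$, and the induction closes. Since $g_0=g$, the resulting family $(\ga)_{\al\in\mc{A}}$ is a tree-analysis of $g$ with $\vvert[\ga]=\vvert[\fa]$ throughout, with matching weights at the weighted nodes and with $\ga=\tfrac12 S_{F_\al}\gb$ at the $G$-nodes, as asserted.
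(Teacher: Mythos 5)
Your proposal is correct and takes essentially the same route as the paper: the paper states the lemma as an immediate consequence of the single observation that a $(2j+1)$-special sequence is determined by the absolute values (hence also the weights) of its terms, which is precisely the crux of your induction at the odd nodes. The remainder of your argument --- the diagonal sign operator $D_\eta$ commuting with the scalars $\tfrac{1}{m_j},\tfrac12$, with interval projections and with $S_{F_\al}$, so that legality of the even, interval and $G$-operations transfers verbatim --- is the routine elaboration the paper leaves implicit.
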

The following follows easily.
\begin{lemma}\label{newtree}
  Let $f\in W$ with a tree-analysis $(\fa)_{\al\in\mc{A}}$. Let  also $D\subset\mc{A}$ be a set of incomparable nodes of $\mc{A}$ and for every $\al\in D$ let  $\ga\in W$ such that $\vvert[\ga]=\vvert[\fa]$. Then there exists  $g\in W$ satisfying
  \begin{enumerate}
  \item[i)] $\vvert[g]=\vvert[f]$
    \item[ii)] $g$ admits a tree analysis
      $(\tilde{g}_{\al})_{\al\in\mc{A}}$ such that for every
      $\al\in\mc{A}$,
      $\vvert[\tilde{g}_{\al}]=\vvert[\fa]$ and for every $\al\in D$, $\tilde{g}_{\al}=\ga$.
  \end{enumerate}
\end{lemma}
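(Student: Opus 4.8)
The plan is to build $g$ by a recursion along the tree $\mc{A}$ that leaves the tree-analysis of $f$ untouched \emph{above} the nodes of $D$ and grafts in tree-analyses of the prescribed $\ga$ \emph{below} them. Since the elements of $D$ are pairwise incomparable, the subtrees $\{\be\in\mc{A}:\be\succeq\al\}$ for $\al\in D$ are pairwise disjoint, so each node of $\mc{A}$ lies at or below \emph{at most one} element of $D$ and the construction is unambiguous.

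First, for each $\al\in D$ I would apply the preceding lemma to $\fa$ (equipped with its subtree-analysis $(\fb)_{\be\succeq\al}$) and to $\ga$, using $|\ga|=|\fa|$. This produces a tree-analysis $(\tilde{g}_\be)_{\be\succeq\al}$ of $\ga$ indexed by the subtree rooted at $\al$, with $\tilde{g}_\al=\ga$, with $|\tilde{g}_\be|=|\fb|$ for all $\be\succeq\al$, and crucially with weights and $G$-functional forms preserved node by node. This defines $\tilde{g}_\be$ at every node at or below some element of $D$. At the remaining nodes I define $\tilde{g}_\al$ by recursion from the leaves towards the root, mirroring the operation that produced $\fa$: if $\al$ is a leaf not below $D$ then $\fa=\pm e_n^*$ and I set $\tilde{g}_\al=\fa$; if $\fa=\frac{1}{m_j}\sum_{\be\in S_\al}\fb$ is a weighted node (Definition~\ref{treeanal}, case 3) I set $\tilde{g}_\al=\frac{1}{m_j}\sum_{\be\in S_\al}\tilde{g}_\be$ with the same interval $E_\al$; and if $\fa=\frac{1}{2}S_{F_\al}\fb$ is a $G$-node (case 4) I set $\tilde{g}_\al=\frac{1}{2}S_{F_\al}\tilde{g}_\be$ with the same Schreier set $F_\al$. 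Because the tree is finite, this terminates, each child being processed before its parent.

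Two invariants propagate through the recursion. First, $|\tilde{g}_\al|=|\fa|$ at every node: for weighted nodes the children have successive (hence disjoint) supports, so passing to absolute values commutes both with the averaging and with the interval restriction $E_\al$; for $G$-nodes $S_{F_\al}$ is a coordinate projection; in either case the equalities $|\tilde{g}_\be|=|\fb|$ at the children force the equality at $\al$. In particular $|g|=|\tilde{g}_0|=|f_0|=|f|$, giving (i). Second, weights are preserved, $w(\tilde{g}_\al)=w(\fa)$, either by the preceding lemma (for nodes meeting $D$) or by construction (for recursively defined weighted nodes). What remains is to check that each reconstructed node genuinely lies in $W$, i.e. that the prescribed operation is admissible for the new children; granted that, closure properties $\gamma)$--$\zeta)$ place every $\tilde{g}_\al$ in $W$ and exhibit $(\tilde{g}_\al)_{\al\in\mc{A}}$ as a tree-analysis of $g:=\tilde{g}_0\in W$, yielding (ii).

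The one point demanding real care is this admissibility check. For the even operations and the $G$-operations it is immediate, since $\mc{A}_{n_{2j}}$-admissibility and the Schreier condition depend only on the supports, which are unchanged. The delicate case is the odd operation on a $(2j+1)$-special sequence: I must verify that $(\tilde{g}_i)_{i\leq n_{2j+1}}$ is again special. This is exactly where the design of the coding is used. A special sequence is determined by the sequence of absolute values of its entries; the coding function $\sigma$ reads only $(|\tilde{g}_1|,\dots,|\tilde{g}_{i-1}|)=(|f_1|,\dots,|f_{i-1}|)$, and the weights $w(\tilde{g}_i)=w(f_i)$ agree by the invariant above, so conditions (1)--(4) defining a $(2j+1)$-special sequence transfer verbatim from $(f_i)$ to $(\tilde{g}_i)$. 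I expect this special-sequence verification to be the main obstacle, with the bookkeeping of the interval restrictions $E_\al$ in the weighted case a secondary and purely routine nuisance.
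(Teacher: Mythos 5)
Your proposal is correct and takes essentially the same route as the paper, which offers no written proof beyond noting that the lemma ``follows easily'' from the preceding lemma together with the observation that special sequences are determined by the absolute values of their entries: you apply that lemma on the pairwise disjoint subtrees rooted at the nodes of $D$, rebuild the remaining nodes from the leaves upward, and use the coding function $\sigma$ (plus support-preservation) to see that every even, odd and $G$-operation remains admissible. The one step you compress --- extending the boundary restrictions $E_{\al}f_i$ of special-sequence entries back to full functionals (via unconditionality and interval-projection closure of $W$) so that a rebuilt odd-weight node is literally an interval projection of a special functional --- is exactly the routine bookkeeping you acknowledge, and it goes through without difficulty.
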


\section{Basic estimations}
In this section we shall give the definition of some special
vectors as well as  estimations of the functionals of the norming
set on these special vectors.  All the definitions and estimations
have appeared in a series of  papers, \cite{S,GM,ATod}, so for the
proofs we shall refer to a paper where the corresponding result
has appeared.
\subsection{Special vectors}
\begin{definition}
  \label{saverage}
A  $C-\ell_{1}^{n}$-average, $C\geq 1$, $n\in\N$, is a vector
$x=\frac{x_{1}+\dots+x_{n}}{n}$ where $\norm[x_{i}]\leq C$,
$\norm[x]> 1$ and $n\leq x_{1}<x_{2}<\dots<x_{n}$.
\end{definition}
\begin{lemma}\label{esaverage}
Let $(x_{k})_{k}$ be a normalized block sequence. Then for every
$n\in\N$ there exists  $l(n)\in N$  such that for every finite
subsequence $(x_n)_{n\in F}$ with $\#F\geq l(n)$ of $(x_{k})_{k}$
there exists a block sequence $y_{1}<y_{2}<\dots<y_{n}$ of
$(x_{k})_{k\in F}$ such that
$\frac{y_{1}+\dots+y_{n}}{n}=\sum_{n\in F}a_{n}x_{n}$ is an
$2-\ell_{1}^{n}$-average.
\end{lemma}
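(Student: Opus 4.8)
The plan is to argue by contradiction: I fix $n$ and suppose that no $2$-$\ell_1^n$-average can be built from blocks of $(x_k)_{k\in F}$, however large $F$ is, and then produce a numerical contradiction by playing a lower estimate coming from the even operation $\delta)$ against an upper estimate coming from iterated averaging. The first step is a trivial but crucial reformulation of the hypothesis. If there existed $n$ successive normalized blocks $w_1<\dots<w_n$ of $(x_k)_{k\in F}$, with $n\le\minsupp w_1$, such that $\norm[\tfrac1n(w_1+\dots+w_n)]>\tfrac12$, then $y_i=2w_i$ would furnish a $2$-$\ell_1^n$-average (each $\norm[y_i]=2\le2$ and $\norm[\tfrac1n\sum y_i]>1$). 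Hence the contradiction hypothesis yields the statement $(\ast)$: \emph{every $n$ successive normalized blocks of $(x_k)_{k\in F}$ have average of norm at most $\tfrac12$.}

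Next I would record the lower estimate. For each $k$ choose, using that $W$ is norming and closed under interval projections (property $\gamma)$), a functional $f_k\in W$ with $\supp f_k\subseteq\ran x_k$ and $f_k(x_k)\ge\tfrac12$. Whenever $N\le n_{2j}$, the successive family $f_1<\dots<f_N$ is $\mc{A}_{n_{2j}}$-admissible, so the even operation $\delta)$ gives $f=\frac{1}{m_{2j}}\sum_{k=1}^{N}f_k\in W$, and testing against the average yields
\[
\norm[\tfrac1N(x_1+\dots+x_N)]\ \ge\ f\Big(\tfrac1N\sum_{k=1}^N x_k\Big)\ =\ \frac{1}{m_{2j}N}\sum_{k=1}^N f_k(x_k)\ \ge\ \frac{1}{2m_{2j}}.
\]

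For the upper estimate I would run the standard iterated-averaging tree. Taking $n^t$ successive blocks $x_{k_1}<\dots<x_{k_{n^t}}$, set $u^0_i=x_{k_i}$ and $u^s_i=\frac1n\sum u^{s-1}_j$ over consecutive groups of $n$. An induction on $s$ shows $\norm[u^s_i]\le 2^{-s}$: normalizing the pieces to $\widehat{u}^{\,s-1}_j=u^{s-1}_j/\norm[u^{s-1}_j]$ and applying $(\ast)$ gives $\norm[\frac1n\sum_j\widehat{u}^{\,s-1}_j]\le\frac12$, while $1$-unconditionality of the basis (Remark (i)) together with $\norm[u^{s-1}_j]\le 2^{-(s-1)}$ and the pairwise disjointness of the supports of the $\widehat{u}^{\,s-1}_j$ gives $\norm[u^s_i]\le 2^{-(s-1)}\cdot\frac12=2^{-s}$. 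In particular $\norm[\frac{1}{n^t}\sum_{i=1}^{n^t}x_{k_i}]\le 2^{-t}$.

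Finally I would balance the two estimates. Choosing $N=n^t\le n_{2j}$ forces $\frac{1}{2m_{2j}}\le 2^{-t}$, i.e. $2^{t}\le 2m_{2j}$, so a contradiction arises as soon as one can select $t,j$ with $n^t\le n_{2j}$ and $2^{t}>2m_{2j}$. This is exactly where the growth of the defining sequences is used: since $n_{j+1}=(5n_j)^{s_j}$ with $s_j=\log_2(m_{j+1}^3)$, the quantity $\log_2 n_{2j}$ eventually dominates $(\log_2 n)(1+\log_2 m_{2j})$ for every fixed $n$, which produces an admissible $t$ with $1+\log_2 m_{2j}<t\le\log_n n_{2j}$; setting $l(n)=n^t$ then guarantees a $2$-$\ell_1^n$-average inside any $F$ with $\#F\ge l(n)$. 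I expect the only delicate point to be precisely this calibration of scales — matching the exponential decay $2^{-t}$ against the even-operation floor $1/m_{2j}$ while respecting the admissibility constraint $n^t\le n_{2j}$ — together with the routine bookkeeping of the support requirement $n\le\minsupp w_1$, which is arranged by discarding an initial segment of $(x_k)$.
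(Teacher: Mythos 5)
Your proof is correct and follows essentially the same route as the paper's: the paper does not prove this lemma itself but defers to the standard argument of Schlumprecht/Gowers--Maurey (\cite{ATod}, Lemma II.22), which is exactly your scheme --- assume no $2$-$\ell_1^n$-average can be formed, derive the geometric decay $2^{-t}$ for iterated averages of $n^t$ vectors, and contradict the lower bound $1/(2m_{2j})$ coming from the $(\frac{1}{m_{2j}},\mathcal{A}_{n_{2j}})$-operations, using the growth of $(n_j)$ relative to $(m_j)$ to fit an integer $t$ with $2^t>2m_{2j}$ and $n^t\le n_{2j}$. The bookkeeping you flag (pruning the at most $n-1$ terms with $\minsupp x_k<n$ and enlarging $l(n)$ accordingly) is routine and does not affect the argument.
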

The proof of the above lemma originates from \cite{S,GM}. For a proof we refer to \cite{ATod}, Lemma II.22.
\begin{definition}
  \label{dRIS}
A block sequence $(x_{k})_{k}$ is said to be a $(C,\e)$ rapidly
increasing sequence (RIS) if $\norm[x_{k}]\leq  C$ for each $k$
and there exists a strictly increasing sequence $(j_k)$   of
positive integers such that
  \begin{enumerate}
  \item $\max\ran(x_{k}) m_{j_{k+1}}^{-1}<\e$,
\item for every $k=1,2,\dots$ and every $f\in W$ with
$w(f)=m_{i}<m_{j_{k}}$ the following holds, $\vvert[f(x_{k})]\leq
\frac{C}{m_{i}}$.
\end{enumerate}
\end{definition}

\begin{definition}\label{exactpair}
  A pair  $(x,\phi)$ with $x\in \mc{X}_{(4)}$ and $\phi\in W$ is said to be a $(C,2j)$-exact pair, where $C\geq 1$, $j\in\N$, if the following conditions holds
  \begin{enumerate}
  \item $1\leq\norm[x]\leq C$, for every $\psi\in W$ with $w(\psi)<m_{2j}$ we have $\vvert[\psi(x)]\leq \frac{3C}{w(\psi)}$, while for $\psi\in W$ with $w(\psi)>m_{2j}$, $\vvert[\psi(x)]\leq \frac{C}{m_{2j}^{2}}$,
  \item $w(\phi)=m_{2j}$,
  \item $\phi(x)=1$ and $\ran(x)=\ran(\phi)$.
  \end{enumerate}
\end{definition}

\begin{definition}
  \label{4j+1-special}
We shall call the sequence   $(x_{i},x_{i}^{*})_{i=1}^{n_{2j+1}}$ a $(C,2j+1)$-dependent sequence  if
\begin{enumerate}
\item $j_{1}\in N_{1}$ and  $m_{2j_{1}}\geq n_{2j+1}$,
\item for every $i\leq  n_{2j+2}$,  $(x_{i},x_{i}^{*})$ is an  $(C, 2j_{i})$-exact pair,
\item $ (x_{1}^{*},\dots,x_{n_{2j+1}}^{*})$-is a special sequence.
\end{enumerate}
\end{definition}
\subsection{Basic estimations}
\begin{lemma}
  \label{l1average}
  Let  $x$ be a $2-\ell_{1}^{n_{j}}$-average. Then for every $f\in W$ with $w(f)=m_{i}<m_{j_{k}}$  the following holds
 \begin{equation}
   \label{eq:19}
 \vvert[f(x_{k})]\leq 3\frac{1}{m_{i}}.
 \end{equation}
\end{lemma}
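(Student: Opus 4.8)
The plan is to bound $\vvert[f(x)]$ by reducing to the outermost weighted operation producing $f$ and exploiting that $x$ is an average of $n_j$ vectors, a number vastly larger than the number $n_i$ of branches at the root of $f$. First, since $w(f)=m_i$, a tree-analysis of $f$ has a weighted root, so I would write $f=\frac{1}{m_i}\sum_{t=1}^{n_i}g_t$ with $g_1<\dots<g_{n_i}$ in $W$; here each $g_t$ is the restriction to $\ran(f)$ of a branch of the root, which remains in $W$ by closedness under interval projections (property $\gamma)$). Writing $x=\frac{1}{n_j}(x_1+\dots+x_{n_j})$ with $\norm[x_s]\le 2$, I expand
\[ f(x)=\frac{1}{n_j}\sum_{s=1}^{n_j}f(x_s). \]

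The heart of the argument is a counting split of the indices $s$. Let $A$ collect those $s$ for which $\supp x_s$ meets at least two of the functionals $g_t$. Since the $g_t$ are successive and the ranges of the $x_s$ are pairwise disjoint, each $s\in A$ forces $\ran(x_s)$ to contain at least one of the $n_i-1$ gaps separating consecutive $g_t$, and distinct $s$ consume distinct gaps; hence $\vvert[A]\le n_i$. For $s\in A$ I would use only the crude estimate $\vvert[f(x_s)]=\vvert[(\ran(x_s)f)(x_s)]\le\norm[x_s]\le 2$, valid because $\ran(x_s)f\in W$ again by property $\gamma)$. For $s\notin A$ at most one $g_t$ meets $x_s$, so $f(x_s)=\frac{1}{m_i}g_t(x_s)$, and $\vvert[f(x_s)]\le 2/m_i$ since $\norm[x_s]\le 2$ and $g_t$ is norming.

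Collecting the two contributions then gives
\[ \vvert[f(x)]\le\frac{1}{n_j}\Big(2\,\vvert[A]+n_j\tfrac{2}{m_i}\Big)\le\frac{2n_i}{n_j}+\frac{2}{m_i}, \]
and since $m_i<m_j$ forces $i<j$, the rapid growth of $(n_j)_j$ relative to $(m_j)_j$ built into the recursion yields $n_j\ge 2m_in_i$, whence $2n_i/n_j\le 1/m_i$ and the asserted bound $3/m_i$ follows. The only features of the average actually used are the normalization $\norm[x_s]\le 2$ and the closedness of $W$ under interval projections.

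The step I expect to be the main obstacle is the combinatorial estimate $\vvert[A]\le n_i$ (the ``gap-consumption'' counting), together with verifying that the two partial sums assemble to \emph{exactly} the constant $3$; everything else is routine bookkeeping with the hypotheses and the growth of the sequences $(m_j)_j,(n_j)_j$.
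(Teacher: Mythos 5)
Your proof is correct, and it is essentially the argument that the paper itself invokes only by citation (the paper gives no proof of this lemma, referring instead to Gowers--Maurey Lemma 5 and Lemma II.23 of Argyros--Todorcevic): write $f=\frac{1}{m_i}\sum_t g_t$ using the definition of weight and closedness under interval projections, observe that only the averages split by several $g_t$ cost the full norm, and absorb those few terms using $n_j\gg m_i n_i$. The only cosmetic difference is the direction of the counting: you bound the number of straddling averages by gap-consumption (at most $n_i-1$ of the disjoint ranges $\ran(x_s)$ can contain a gap between consecutive $g_t$'s), whereas the standard cited proof counts, for each $g_t$, the at most two averages it partially covers, which gives the marginally sharper estimate $\frac{2}{m_i}\left(1+\frac{2n_i}{n_j}\right)$ --- the same combinatorics, and both comfortably yield the constant $3$ given the growth of the sequences.
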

We refer to \cite{S},\cite{GM} Lemma 5, \cite{ATod} Lemma II.23 for the proof.

The following result follows from Lemmas~\ref{l1average} and \ref{esaverage}
\begin{proposition}\label{ris}
For every $\e>0$ and every  block subspace $Z$ of $\mc{X}_{(4)}$ there exists a $(3,\e)$-RIS  $(x_{k})_{k}$ in $Z$.
\end{proposition}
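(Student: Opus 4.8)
The plan is to build the RIS directly as a block sequence of $2$-$\ell_1^{n_{j_k}}$-averages, choosing the indices $j_k$ to grow fast enough that condition (1) of Definition~\ref{dRIS} holds, while condition (2) is handed to us by Lemma~\ref{l1average}.

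First I would fix a normalized block sequence $(z_m)_m$ in $Z$; this exists because $Z$ is an infinite-dimensional block subspace. I then construct vectors $x_1<x_2<\dots$ and indices $j_1<j_2<\dots$ recursively. Suppose $x_1,\dots,x_{k-1}$ and $j_1,\dots,j_k$ have been chosen, together with a tail $(z_m)_{m\geq M_k}$ of the normalized sequence lying strictly to the right of $x_{k-1}$. Applying Lemma~\ref{esaverage} with $n=n_{j_k}$ to this tail yields the number $l(n_{j_k})$; taking any $l(n_{j_k})$ consecutive vectors $z_m$ with $m\geq M_k$, I obtain a block sequence $y_1<\dots<y_{n_{j_k}}$ of them whose average $x_k=\frac{y_1+\dots+y_{n_{j_k}}}{n_{j_k}}$ is a $2$-$\ell_1^{n_{j_k}}$-average. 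By Definition~\ref{saverage} the component vectors satisfy $\norm[y_i]\leq 2$, so $\norm[x_k]\leq\frac{1}{n_{j_k}}\sum_i\norm[y_i]\leq 2\leq 3$, which gives the norm bound in Definition~\ref{dRIS} with $C=3$; moreover $x_{k-1}<x_k$ by construction.

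Having fixed $x_k$, and hence the finite number $\max\ran(x_k)$, I would then choose $j_{k+1}>j_k$ large enough that $m_{j_{k+1}}>\max\ran(x_k)/\e$, which is possible since $(m_j)_j$ is strictly increasing to infinity; this is precisely condition (1) of Definition~\ref{dRIS}. I also set $M_{k+1}$ so that $(z_m)_{m\geq M_{k+1}}$ lies to the right of $x_k$, and continue the recursion. It is important to carry out the two choices in this order, first the average $x_k$ and only then the index $j_{k+1}$, so that the apparent circular dependence between $x_k$ and $j_{k+1}$ is broken: $x_k$ depends only on $j_k$, while $j_{k+1}$ is chosen afterwards to control $\max\ran(x_k)$.

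Finally I would verify condition (2). Since each $x_k$ is a $2$-$\ell_1^{n_{j_k}}$-average, Lemma~\ref{l1average} applies directly: for every $f\in W$ with $w(f)=m_i<m_{j_k}$ one has $\vvert[f(x_k)]\leq 3/m_i=C/m_i$, which is exactly the required estimate. Hence $(x_k)_k$ is a $(3,\e)$-RIS in $Z$. There is no serious obstacle here; the only points that demand care are the ordering of the recursive choices noted above and the bookkeeping that keeps the averages successive and supported arbitrarily far out, both of which are routine once Lemmas~\ref{esaverage} and~\ref{l1average} are in hand.
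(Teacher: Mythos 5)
Your proof is correct and takes essentially the same approach as the paper, which derives Proposition~\ref{ris} precisely from Lemmas~\ref{esaverage} and~\ref{l1average}: build successive $2$-$\ell_1^{n_{j_k}}$-averages in $Z$ via Lemma~\ref{esaverage}, get condition (2) of Definition~\ref{dRIS} with $C=3$ from Lemma~\ref{l1average}, and choose each $j_{k+1}$ after $x_k$ to secure condition (1). The recursive bookkeeping you spell out is exactly the routine detail the paper leaves implicit.
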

The following proposition will be the main tool for the
estimations we shall need in the sequel. For the proof we refer to
\cite{ATod}, Propositions II.14, II.19.
\begin{proposition}
  \label{eRIS} Let $(x_{k})_{k=1}^{n_{j}}$ be a $(C,\e)$-RIS with $\e\leq  m_{j}^{-2}$ and $f\in W$. Then
  \begin{equation}
    \label{eq:7}
    \vvert[f(\frac{m_{j}}{n_{j}}\sum_{k=1}^{n_{j}}x_{k})]\leq
    \begin{cases}
      3Cw(f)^{-1}\,\,&\textrm{if}\,\,\,\,\, w(f)<m_{j}\\
       C(w(f)^{-1}m_{j}+\frac{m_{j}}{n_{j}}+m_{j}\e)\,\,&\textrm{if}\,\,\,\,
       w(f)\geq  m_{j}
    \end{cases}
  \end{equation}
In particular
$\norm[\frac{m_{j}}{n_{{j}}}\sum_{k=1}^{n_{j}}x_{k}]\leq 2C$.

Moreover if $(f_\alpha)_{\alpha\in\mathcal{A}}$ is a tree analysis
of $f$ and for every $\alpha\in\mathcal{A}$ with
$w(f_\alpha)=m_{j}$ and every interval $E$ of positive integers we
have that
$$
\vvert[f_\alpha(\sum_{k\in E}x_{k})]\leq C(1+\e\# E )
$$
then $\vvert[f(\frac{m_{j}}{n_{j}}\sum_{k=1}^{n_{j}}y_{k})]\leq \frac{4C}{m_{j}}$.
\end{proposition}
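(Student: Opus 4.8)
The plan is to argue by induction over the tree-analysis $(\fa)_{\al\in\mc{A}}$ of $f$ provided by Definition~\ref{treeanal}, reducing the action of $f$ on the average $x=\frac{m_j}{n_j}\sum_{k=1}^{n_j}x_k$ to the action of an auxiliary weighted functional on the unit vector basis of the auxiliary mixed Tsirelson space $\mt$ determined by the operations $(\tfrac{1}{m_i},\mc{A}_{n_i})_i$ alone. The engine is a \emph{basic inequality}: to each node one associates a functional $g$ in the norming set of $\mt$, supported on $\{1,\dots,n_j\}$, together with an error stemming from the (at most two) RIS vectors whose range is split by the node, so that
$$\vvert[f(x)]\le C\,g\Big(\frac{m_j}{n_j}\sum_{k=1}^{n_j}e_k\Big)+(\text{error}).$$
The two clauses of Definition~\ref{dRIS} enter in complementary regimes: clause (2) bounds $\vvert[h(x_k)]$ by $C\,w(h)^{-1}$ whenever a branch functional $h$ satisfies $w(h)<m_{j_k}$, which lets every low-weight part of the tree be absorbed into the $\mt$-estimate, while clause (1), $\max\ran(x_k)\,m_{j_{k+1}}^{-1}<\e$, controls each boundary vector and is what ultimately produces the $m_j\e$ error.

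Once the basic inequality is in place I would read off both cases of \eqref{eq:7} from the standard estimate in $\mt$. If $w(f)=m_i<m_j$, a weight-$m_i$ functional evaluated on the $\mc{A}_{n_j}$-average $\frac{m_j}{n_j}\sum_ke_k$ is at most $3/m_i$ because the averaging spreads its mass, giving $3Cw(f)^{-1}$; if $w(f)\ge m_j$, the outer averaging contributes the factor $m_j/w(f)$, a single fully-normed vector contributes $Cm_j/n_j$ after normalization, and the split vectors contribute the range error $m_j\e$, giving the second line. The bound $\norm[x]\le 2C$ then follows by maximizing over $f\in W$: in the first regime the worst case $w(f)=m_1=2$ gives $3C/2\le 2C$, and in the second regime $w(f)^{-1}m_j\le 1$, $m_j/n_j$ is negligible since $n_j\gg m_j$, and $m_j\e\le m_j^{-1}\le\tfrac12$ by the hypothesis $\e\le m_j^{-2}$, so the sum stays below $2C$.

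For the refined bound $\tfrac{4C}{m_j}$ the extra hypothesis is precisely what removes the dangerous contribution of the weight-$m_j$ level. At a node $\fa$ with $w(\fa)=m_j$ the trivial estimate ``one full vector per branch'' is replaced by the assumed $\vvert[\fa(\sum_{k\in E}x_k)]\le C(1+\e\#E)$; feeding this into the induction and then applying the outer operator $\frac{m_j}{n_j}\sum_k$ turns the leading term $C\,w(f)^{-1}m_j$ (which was $O(C)$ when $w(f)=m_j$) into $C(\tfrac{m_j}{n_j}+m_j\e)=O(C/m_j)$, since $n_j\gg m_j$ and $\e\le m_j^{-2}$. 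Collecting the constants lands at $4C/m_j$.

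The main obstacle is fitting the two non-standard node types into this inductive scheme, since the references \cite{S,GM,ATod} treat only the ordinary $(\tfrac{1}{m_i},\mc{A}_{n_i})$-operations. The $(2j+1)$-special functionals from clause~$\e)$ are handled exactly as ordinary weight-$m_{2j+1}$ weighted functionals for the present estimate, the coding $\sigma$ playing no role here (it is needed only for the dependent-sequence estimates of the subsequent sections). The genuinely new point is the $G$-functionals $\fa=\tfrac12 S_{F_\al}\fb$ of clause~$\zeta)$: here the comb $S_{F_\al}$ is a union-of-intervals restriction, so by the $1$-unconditionality of the basis it cannot increase $\vvert[\fb(x_k)]$, the factor $\tfrac12=m_1^{-1}$ lets a $G$-node be absorbed as a weight-$m_1$ node in $\mt$, and the Schreier condition $2q\le n_1$ bounds the number of teeth and hence keeps the range-splitting error within the $m_j\e$ budget. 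Verifying that these nodes slot into the basic inequality without degrading the constants is the technical heart of the argument.
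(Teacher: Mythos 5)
Your overall scheme --- the basic inequality that dominates $f$ acting on the RIS average by an auxiliary functional acting on $\frac{m_j}{n_j}\sum_{k}e_k$ in the mixed Tsirelson space $\mt$ --- is exactly what the paper itself appeals to (its ``proof'' is the citation of Propositions II.14 and II.19 of \cite{ATod}), and you are right that the only point those references do not cover is clause $\zeta)$. But your treatment of that point rests on a false claim, so there is a genuine gap. One-unconditionality gives $\norm[S_{F_\al}x_k]\leq\norm[x_k]$; it does \emph{not} give $\vvert[S_{F_\al}\fb(x_k)]\leq \vvert[\fb(x_k)]$, because a restriction can undo cancellation: for $\fb=\frac{1}{m_2}(e_2^*+e_3^*)\in W$, $x=e_2-e_3$ and the Schreier set $F=\{2,3\}$ one has $\fb(x)=0$ while $S_F\fb(x)=1/m_2$. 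This is structural, not cosmetic: since $\fa(v)=\frac{1}{2}\fb(S_{F_\al}v)$, everything below a $G$-node acts on the chopped vectors $S_{F_\al}x_k$, and Definition~\ref{dRIS} controls the action of weighted functionals on the $x_k$ themselves, not on their restrictions to unions of intervals --- the chopped sequence need not be a RIS at all. Your two fallback devices do not close this. The Schreier condition bounds the number of teeth by $\min F_\al/2$, a quantity with no relation to $n_j$ or $\e$: since nothing prevents $\min\supp x_1\geq 2n_j$, a single $G$-node can split \emph{every} $x_k$, and the crude per-vector bound then produces an error of order $Cm_j$, far outside your ``$m_j\e$ budget''. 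And a weight-$m_1$ node of $\mt$ admits at most $n_1=4$ (or $4n_1$ in the augmented auxiliary set) successors, so it cannot absorb arbitrarily many split indices $e_k^*$.

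The missing idea --- and, implicitly, the reason the paper may cite \cite{ATod} verbatim --- is a positivity reduction. Taking absolute values node by node in a tree analysis again produces a tree analysis: for weighted nodes this is clear, for special nodes it is legitimate because the coding in clause $\varepsilon)$ was arranged to depend only on the $\vvert[f_i]$'s, and for $G$-nodes $\vvert[\fa]=\frac{1}{2}S_{F_\al}\vvert[\fb]$. Moreover $(\vvert[x_k])_k$ is again a $(C,\e)$-RIS: condition (2) transfers because any sign-change of a weighted functional lies in $W$ and has the same weight, while condition (1) and the norms are unchanged. Since $\vvert[f(\frac{m_j}{n_j}\sum_kx_k)]\leq \vvert[f](\frac{m_j}{n_j}\sum_k\vvert[x_k])$, one may therefore assume that all nodes $\fa$ and all vectors $x_k$ have non-negative coordinates. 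Then at every $G$-node, for every coordinatewise non-negative $v$, one has $\fa(v)=\frac{1}{2}\fb(S_{F_\al}v)\leq\frac{1}{2}\fb(v)$, since $\fb\geq 0$ and $S_{F_\al}v\leq v$ coordinatewise. So after the reduction the $G$-nodes are pointwise dominated by their children: they split no vectors, contribute nothing to the auxiliary functional, and are simply passed through; the induction of \cite{ATod} then runs on the weighted nodes exactly as you outline, yielding both cases of \eqref{eq:7}, the bound $2C$, and the refined $4C/m_j$ estimate. Without this (or an equivalent) reduction, your handling of clause $\zeta)$ does not stand.
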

Proposition~\ref{eRIS} yields the following
\begin{proposition}
\label{exexact} For every block subspace $Z$, every $\e>0$ and $j\in\N$ there exists a $(6,2j)$-exact pair $(x,\phi)$ with $x\in
Z$.
\end{proposition}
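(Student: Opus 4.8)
The plan is to realise $x$ as a normalised average of a rapidly increasing sequence drawn from $Z$ and $\phi$ as the associated weight-$m_{2j}$ averaging functional, so that the estimates required in Definition~\ref{exactpair} can essentially be read off from Proposition~\ref{eRIS}. Shrinking the given $\e$ if necessary, I first assume $\e\le m_{2j}^{-4}$. By Proposition~\ref{ris} I choose a $(3,\e)$-RIS $(x_k)_{k=1}^{n_{2j}}$ in $Z$; since these vectors are $\ell_1$-averages and hence have norm at least $1$, dividing each by its norm keeps them a $(3,\e)$-RIS (normalising only decreases the action of functionals), so I may assume $\norm[x_k]=1$. I then set
\[ x=\frac{m_{2j}}{n_{2j}}\sum_{k=1}^{n_{2j}}x_k . \]
The ``in particular'' clause of Proposition~\ref{eRIS}, applied with its index equal to $2j$ and its constant equal to $3$, gives immediately $\norm[x]\le 2\cdot 3=6$, which is the upper bound in condition (1).

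Next I would build $\phi$. For each $k$ I choose $\phi_k\in W$ with $\supp\phi_k\subseteq\ran(x_k)$ and $\phi_k(x_k)=1=\norm[x_k]$; closure of $W$ under interval projections (property $\gamma)$) shows that $\norm[x_k]$ is computed by functionals living in $\ran(x_k)$. Because $\ran(x_1)<\dots<\ran(x_{n_{2j}})$, the sequence $\phi_1<\dots<\phi_{n_{2j}}$ is $\mc{A}_{n_{2j}}$-admissible, so the $(\tfrac1{m_{2j}},\mc{A}_{n_{2j}})$-operation (property $\delta)$) yields
\[ \phi=\frac{1}{m_{2j}}\sum_{k=1}^{n_{2j}}\phi_k\in W,\qquad w(\phi)=m_{2j}, \]
which is condition (2). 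As $\supp\phi_k\subseteq\ran(x_k)$ and the ranges are successive, all cross terms vanish and
\[ \phi(x)=\frac{m_{2j}}{n_{2j}}\cdot\frac1{m_{2j}}\sum_{k=1}^{n_{2j}}\phi_k(x_k)=\frac1{n_{2j}}\sum_{k=1}^{n_{2j}}1=1; \]
in particular $\norm[x]\ge 1$. Choosing $\phi_1$ and $\phi_{n_{2j}}$ so that their supports reach $\min\supp x_1$ and $\max\supp x_{n_{2j}}$ arranges $\ran(\phi)=\ran(x)$, completing condition (3).

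It then remains to verify the two functional estimates in condition (1) with $C=6$, and here I simply feed $x$ into Proposition~\ref{eRIS}. For $\psi\in W$ with $w(\psi)<m_{2j}$ the first branch of \eqref{eq:7} gives $\vvert[\psi(x)]\le 9/w(\psi)\le 3\cdot 6/w(\psi)$. For $w(\psi)>m_{2j}$ one has necessarily $w(\psi)\ge m_{2j+1}=m_{2j}^{5}$, so the second branch yields
\[ \vvert[\psi(x)]\le 3\bigl(w(\psi)^{-1}m_{2j}+\tfrac{m_{2j}}{n_{2j}}+m_{2j}\e\bigr)\le 3\bigl(m_{2j}^{-4}+m_{2j}^{-2}+m_{2j}^{-3}\bigr)\le\frac{6}{m_{2j}^{2}}, \]
where I used $w(\psi)\ge m_{2j}^{5}$, the bound $n_{2j}\ge 2^{s_{2j-1}}=m_{2j}^{3}$ coming from $s_{2j-1}=\log_2(m_{2j}^{3})$, and $\e\le m_{2j}^{-4}$; the case $w(\psi)=m_{2j}$ is not constrained by (1). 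This gives all the requirements and produces a $(6,2j)$-exact pair $(x,\phi)$ with $x\in Z$.

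The functional estimates are therefore an essentially immediate consequence of Proposition~\ref{eRIS} once the growth of $(n_j)$ and the smallness of $\e$ are checked. The one genuinely delicate point is the \emph{exact} normalisation $\phi(x)=1$ together with $\ran(\phi)=\ran(x)$: since $\norm[x]\le 6$ is attained with no slack, I cannot replace $x$ by $x/\phi(x)$ without breaking the upper bound, and must instead norm each $x_k$ exactly inside its own range and then match the two endpoints of the range. This forces me to use that, for the finitely supported vector $x_k$, the supremum defining $\norm[x_k]$ is attained by an element of $W$ supported in $\ran(x_k)$ (passing, if needed, to the pointwise closure of $W$, which induces the same norm and inherits the closure properties $\gamma)$ and $\delta)$); this is the step to carry out with care.
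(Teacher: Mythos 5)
Your proof is correct and follows essentially the same route as the paper: take a $(3,\e)$-RIS of length $n_{2j}$ in $Z$ with $\e$ small (the paper takes $\e<m_{2j}^{-5}$), pick for each $x_k$ a norming functional of matching range, combine these under the $(\tfrac{1}{m_{2j}},\mc{A}_{n_{2j}})$-operation, and read the exact-pair estimates off Proposition~\ref{eRIS}. The paper's proof is exactly this construction stated in three lines, with the numerical verification (which you carry out explicitly, including the norm-attainment point that the paper passes over silently) left to the reader.
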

\begin{proof}
 From Proposition~\ref{ris}  there exists $(x_k)_{k=1}^{n_{2j}}$ $(3,\e)$ RIS with $\e<1/m_{2j}^{5}$. Choose $x_{k}^{*}\in W$ with $x_{k}^{*}(x_{k})=1$ and $\ran(x_{k}^{*})=\ran(x_{k})$. Then Proposition~\ref{eRIS} yields that
 $$
(\frac{m_{j}}{n_{j}}\sum_{k=1}^{n_{2j}}x_{k},\frac{1}{m_{2j}}\sum_{i=1}^{n_{2j}}x_{k}^{*})
$$
is an $(6,2j)$-exact pair.
\end{proof}
\begin{corollary}\label{spRIS}
Let $(x_{i},x_{i}^{*})_{i=1}^{n_{2j+1}}$ be a $(6,2j+1)$-dependent sequence and  $f=\frac{1}{m_{2j+1}}E\sum_{r=1}^{n_{2k+1}}f_{r}$ a special functional  such that $w(f_{r})\ne w(x_{i}^{*})$ for every $i,r\leq n_{2j+1}$. Then
  \begin{equation}
    \label{eq:4}
    \vvert[f(\sum_{i=1}^{n_{2j+1}}x_{i})]
\leq\frac{1}{m_{2j+1}m_{2j+2}^{2} }.
  \end{equation}
\end{corollary}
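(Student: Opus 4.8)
The plan is to exploit the quadratic control in the exact-pair estimate of Definition~\ref{exactpair} together with the hypothesis that the two special sequences $(f_r)_r$ and $(x_i^*)_i$ share no common weight. First I would pull out the leading coefficient and reduce the statement to proving
\[
\Big|\Big(E\sum_{r=1}^{n_{2j+1}}f_r\Big)\Big(\sum_{i=1}^{n_{2j+1}}x_i\Big)\Big|\le \frac{1}{m_{2j+2}^{2}}=\frac{1}{m_{2j+1}^{10}},
\]
after which I would expand the left-hand side as the double sum $\sum_{r,i}\vvert[(Ef_r)(x_i)]$ taken over those pairs whose ranges meet. Since $(f_r)_r$ and $(x_i)_i$ are block sequences, their ranges form two interval partitions, so at most $2n_{2j+1}-1$ pairs contribute; each $x_i$ meets an interval of indices $r$, with at most two boundary functionals straddling $\ran(x_i)$. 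Interval projections keep functionals in $W$ and preserve weights (property $\gamma$), so each surviving piece $Ef_r$ is again weighted of weight $w(f_r)$.

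The heart of the matter is the dichotomy applied to each incident pair: because $w(f_r)\ne w(x_i^*)=m_{2j_i}$, only the two strict cases of Definition~\ref{exactpair} occur, the resonant equal-weight case being excluded. When $w(f_r)>m_{2j_i}$ the exact-pair estimate supplies the quadratic bound $\vvert[(Ef_r)(x_i)]\le 6\,m_{2j_i}^{-2}$; as the weights of a dependent sequence satisfy $m_{2j_i}\ge m_{2j_1}\ge n_{2j+1}$ by Definition~\ref{4j+1-special}, summing these over the $O(n_{2j+1})$ incident pairs already yields a contribution of order $n_{2j+1}^{-1}$. The delicate case is $w(f_r)<m_{2j_i}$, where the pointwise bound $18\,w(f_r)^{-1}$ alone is far too weak (it would sum to $O(1)$); here I would \emph{not} estimate pair-by-pair but instead invoke Proposition~\ref{eRIS}, using that the vectors $x_i$ constitute a $(6,\e)$-RIS with $\e\le m_{2j+1}^{-2}$ (Proposition~\ref{ris}), so that the $m_{2j+1}^{-1}$-averaging absorbs the length $n_{2j+1}$ of the sequence.

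To make the second regime quantitative I would pass to a tree-analysis $(f_\al)_{\al\in\mc{A}}$ of $f$ and verify the hypothesis of the \emph{moreover} part of Proposition~\ref{eRIS}: since $(f_r)_r$ avoids every weight $m_{2j_i}$, no weighted node $f_\al$ is in resonance with the exact pairs, so each such node obeys the non-accumulating estimate $\vvert[f_\al(\sum_{i\in E}x_i)]\le C(1+\e\#E)$ rather than growing linearly in $\#E$. Feeding this into Proposition~\ref{eRIS} converts the naive bound of order $n_{2j+1}/m_{2j+1}$ into one of order $m_{2j+1}^{-1}$ at the top level, and together with the negligible quadratic contribution this comfortably beats the target $m_{2j+1}^{-1}m_{2j+2}^{-2}$.

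The main obstacle is precisely the propagation of this no-resonance property through the whole tree-analysis: one must check that the absence of a common weight between the constituents of $f$ and the functionals $x_i^*$ forces the non-accumulating estimate at \emph{every} weighted node simultaneously, and that the two boundary functionals per block, once split by an interval projection, contribute only lower-order error. This is exactly the coordination guaranteed by the injective coding $\sigma$ and the spread-out choice of special weights, and the resulting computation follows the pattern of the analogous dependent-sequence estimates in \cite{GM,ATod}; I would keep the constraints $j_1\in N_1$ and $m_{2j_1}\ge n_{2j+1}$ from Definition~\ref{4j+1-special} explicit throughout, so that all the quadratic terms sum to a quantity far below $m_{2j+2}^{-2}$.
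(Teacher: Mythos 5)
Your reduction, your count of at most $2n_{2j+1}-1$ incident pairs, and your treatment of the regime $w(f_r)>m_{2j_i}$ (the pairwise exact-pair bound $6/m_{2j_i}^2$ summed over $O(n_{2j+1})$ pairs) all agree with the paper's proof, which introduces for each $i$ the sets $R_{i,1},R_{i,2}$ of incident $r$'s with $w(f_r)$ smaller, respectively larger, than $m_{2j_i}$ and estimates pair by pair throughout. The genuine gap is in your small-weight regime. There you abandon the pairwise estimates and invoke the ``moreover'' part of Proposition~\ref{eRIS}, claiming it yields a bound ``of order $m_{2j+1}^{-1}$ at the top level'' that comfortably beats the target. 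It cannot, for reasons of sheer magnitude: that proposition bounds the action of $f$ on the weighted average $\frac{m_{2j+1}}{n_{2j+1}}\sum_i x_i$ by $4C/m_{2j+1}$, i.e. it bounds $\bigl|f(\sum_i x_i)\bigr|$ by $4Cn_{2j+1}/m_{2j+1}^2$, a quantity larger than $1$, since $n_{2j+1}=(5n_{2j})^{\log_2(m_{2j+1}^3)}\ge m_{2j+1}^{12}$. The conclusion \eqref{eq:4} demands $\bigl|f(\sum_i x_i)\bigr|\le \frac{1}{m_{2j+1}m_{2j+2}^2}=m_{2j+1}^{-11}$, so your key step misses the target by a factor of order $n_{2j+1}m_{2j+2}^2/m_{2j+1}$. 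No RIS-averaging statement can produce a bound this small: the factor $1/m_{2j+2}^2$ in \eqref{eq:4} comes entirely from non-resonance, pair by pair, and not from the $1/m_{2j+1}$ coefficient of $f$. There is also a structural inversion: in the paper, Corollary~\ref{spRIS} is precisely the tool used to \emph{verify} the ``moreover'' hypothesis of Proposition~\ref{eRIS} for weight-$m_{2j+1}$ nodes (see the lemma establishing \eqref{eq:9}, and Lemma~\ref{llast}), so deducing the corollary from that hypothesis gets the logical order backwards, and verifying the hypothesis would in any case force you back to the pairwise estimates.

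What actually closes the small-weight case in the paper is a counting refinement that you dismissed too quickly. Your observation that the bounds $18/w(f_r)$ ``sum to $O(1)$'' is true only under the naive count, in which a single $f_r$ of small weight may engulf many $x_i$'s. The paper notes that for each $r$ there are at most two indices $i$ with $r\in R_{i,1}$ and $\ran(x_i)\subsetneqq\ran(f_r)$; together with the at most two boundary incidences per $r$, this caps the multiplicity of each $f_r$ at a constant. Hence the small-weight total is at most $\sum_r 72/w(f_r)$, and since the weights $w(f_r)$ are distinct even-indexed terms of $(m_k)_k$, all exceeding $n_{2j+1}$ by condition (1) in the definition of special sequences, one has $\sum_r 1/w(f_r)\le 2/n_{2j+1}$. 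Both regimes therefore contribute $O(1/n_{2j+1})$, and the inequality $n_{2j+1}\ge m_{2j+1}^2m_{2j+2}^2$ coming from the recursive choice of $(m_j,n_j)_j$ finishes the proof. The missing idea in your proposal is exactly this multiplicity bound, a consequence of the range and weight structure of the two special sequences, rather than any finer averaging device.
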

 \begin{proof}
   For every $i\leq n_{2j+1}$ set
  $$
R_{i,1}=\{r\leq n_{2j+1}: \ran (f_{r})\cap  \ran(x_{i})\ne\emptyset\, \textrm{and}\, w(f_{r})<m_{2j_{i}}\}
  $$
 and
 $$R_{i,2}=\{r\leq n_{2j+1}: \ran (f_{r})\cap  \ran(x_{i})\ne\emptyset\,
 \textrm{and}\, w(f_{r})>m_{2j_{i}}\} .
 $$
Note that for every $r$ there exists at most two $i$'s such that $r\in R_{i,1}$ and $\ran(x_{i})\subsetneqq \ran(f_{r})$. From \eqref{eq:7} we get
 \begin{equation}
   \label{eq:5}
   \vvert[\sum_{r\in R_{i,1}}f_{r}(x_{i})]
   \leq
   \sum_{r\in R_{i,1}}\frac{18}{w(f_{r})}.
 \end{equation}
 and
 \begin{equation}
   \label{eq:6}
   \vvert[\sum_{r\in R_{i,2}}f_{r}(x_{i})]\leq
   \frac{6}{m_{2j_{i}}^{2}}\# R_{i,2}
 \end{equation}
 Using that $w(f_{r})\geq  m_{2j_{1}}\geq n_{2j+1}$, by  \eqref{eq:5},\eqref{eq:6} we finish the proof.
 \end{proof}
\section{The quasi-minimality}
We shall prove the quasi-minimality in two steps. In the first step we shall handle a special case. More precisely we shall consider  block sequences $(y_{k})_{k=1}^{n_{2j+1}}$, $(z_{k})_{k=1}^{n_{2j+1}}$ such that $x_{k}=y_{k}+z_{k}$, $k\in\N$ for some dependent sequence $(x_{k},f_{k})_{k=1}^{n_{2j+1}}$. For a suitable splittings of $(x_k)$ we show that for every $f\in W$ there exists $g\in W$ such that $\frac{1}{2}f(\frac{m_{2j+1}}{n_{2j+1}}\sum_{k}y_{k})-2\frac{31}{m_{2j+1}}
\leq g(\frac{m_{2j+1}}{n_{2j+1}}\sum_{k}z_{k}))$.

In the second step we prove the quasi-minimality of $\mc{X}_4$ basing on the first step.

Let  $(x_{k},f_{k})_{k=1}^{n_{2j+1}}$ be a $(6,2j+1)$-dependent sequence such that each exact pair $(x_{k},f_{k})$ is of the form as in the proof of Proposition~\ref{exexact}.
Split each $x_k$ and $f_k$ as follows.
$$
x_k=y_k+z_k=
\frac{m_{2j_k}}{n_{2j_k}}
\sum_{i=1}^{n_{2j_{k}}/2}(y_{k,i}+z_{k,i}),\quad
f_{k}=\frac{1}{m_{2j_{k}}}\sum_{i=1}^{n_{2j_{k}}/2}(y_{k,i}^{*}+z_{k,i}^{*}),
$$
where for every $i$, $y_{k,i}<z_{k,i}<y_{k,i+1}$,
$y_{k,i}^{*}(y_{k,i})=1=z_{k,i}^{*}(z_{k,i})$ and
$\ran(y_{k,i}^{*})=\ran (y_{k,i})$,
$\ran(z_{k,i}^{*})=\ran(z_{k,i})$.

Set
$$y=\frac{m_{2j+1}}{n_{2j+1}}\sum_{k=1}^{n_{2j+1}}y_{k}=
\frac{m_{2j+1}}{n_{2j+1}}\sum_{k=1}^{n_{2j+1}}
\frac{m_{2j_{k}}}{n_{2j_{k}}}\sum_{i=1}^{n_{2j_{k}}/2}y_{k,i}
$$
and
$$
z=\frac{m_{2j+1}}{n_{2j+1}}\sum_{k=1}^{n_{2j+1}}z_{k}=
\frac{m_{2j+1}}{n_{2j+1}}\sum_{k=1}^{n_{2j+1}}
\frac{m_{2j_{k}}}{n_{2j_{k}}}\sum_{i=1}^{n_{2j_{k}}/2}z_{k,i}
$$
\begin{proposition}
  \label{qm1}
Let $y,z$ be as above. For all $f\in W$ there exists $g\in W$
such that $\vvert[f]=\vvert[g]$ and
\begin{equation}
  \label{eq:1}
 g(z)\geq \frac{1}{2}f(y)-2\frac{31}{m_{2j+1}}
\end{equation}
\end{proposition}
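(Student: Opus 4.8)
The plan is to turn the signed inequality into an \emph{unsigned} comparison, using the unconditionality of $W$, and then to prove that comparison by reading the contributions of $\vvert[y]$ and $\vvert[z]$ off the tree-analysis of the functional, the symmetry between the two halves being forced by the special-sequence coding. \emph{Reduction to an unsigned estimate.} By property $\alpha)$ the functional $\vvert[f]$, and every functional obtained from it by changing signs, again lies in $W$. Let $g$ be the one with $\vvert[g]=\vvert[f]$ whose sign at each coordinate agrees with that of $z$; then $g\in W$ and $g(z)=\sum_n\vvert[f]_n\,\vvert[z]_n=\vvert[f](\vvert[z])$, while trivially $f(y)\le\vvert[f](\vvert[y])$. (Lemma~\ref{newtree} moreover lets us take $g$ with a tree-analysis mirroring that of $f$, which is convenient below.) Writing $h:=\vvert[f]\ge 0$, it therefore suffices to prove
\[
h(\vvert[z])\ \ge\ \tfrac12\,h(\vvert[y])-\tfrac{62}{m_{2j+1}},\qquad 0\le h\in W,
\]
since then $g(z)=h(\vvert[z])\ge\tfrac12 h(\vvert[y])-\tfrac{62}{m_{2j+1}}\ge\tfrac12 f(y)-2\cdot\tfrac{31}{m_{2j+1}}$.

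\emph{The structural symmetry.} The blocks interleave as $y_{k,1}<z_{k,1}<y_{k,2}<z_{k,2}<\dots$; each $\vvert[y_{k,i}]$, $\vvert[z_{k,i}]$ is a norm-$\le 6$ element of one common RIS with a norming functional of matching range, and $x_k=y_k+z_k$, $f_k=\tfrac1{m_{2j_k}}\sum_i(y_{k,i}^*+z_{k,i}^*)$ is an exact pair of weight $m_{2j_k}$; the averages $y$ and $z$ are of RIS type, so that Proposition~\ref{eRIS} applies, and $\vvert[y]$, $\vvert[z]$ carry identical coefficients on the paired blocks. The decisive point is that the coding in property $\e)$ is computed from the full $\vvert[f_i]$, i.e.\ from blocks containing \emph{both} the $y$- and the $z$-halves; hence a weight-$m_{2j+1}$ special functional cannot resonate with the $y$-halves without resonating equally with the $z$-halves. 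This is exactly where the interval-projection clause $\gamma)$, rather than arbitrary-subset projections, is used, and it is what makes the comparison possible. A direct computation confirms the symmetry: for $\phi=\tfrac1{m_{2j+1}}\sum_k f_k$ one has $\phi(\vvert[y])=\phi(\vvert[z])=\tfrac12$.

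\emph{The estimate along the tree-analysis.} I would bound $h(\vvert[y])$ through the tree-analysis $(h_\alpha)_{\alpha\in\mc{A}}$ of $h$, matching each non-negligible contribution by an equal one to $h(\vvert[z])$. A node of weight smaller than $m_{2j+1}$ contributes, on the doubly-averaged RIS, a quantity controlled by Proposition~\ref{eRIS}; together with the off-diagonal terms of a genuine weight-$m_{2j+1}$ special node (estimated exactly as in Corollary~\ref{spRIS}, using $w(f_r)\ge m_{2j_1}\ge n_{2j+1}$) these are absorbed into the error $62/m_{2j+1}$. Mass of order one on $\vvert[y]$ can only come from a weight-$m_{2j+1}$ special node whose coding matches the dependent sequence $(x_k,f_k)$; by the symmetry above such a node takes the same value on $\vvert[y]$ and on $\vvert[z]$, hence contributes equally to both sides. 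Finally a $G$-node $\tfrac12 S_F$ carries the factor $\tfrac12$ and, $F$ being Schreier, separates a half $y_{k,i}$ from its partner $z_{k,i}$ only when an endpoint of $F$ falls between them; the condition $2q\le\min F$ thus limits the number of separated pairs, while all remaining pairs are treated symmetrically and the few separated ones cost at most $1/m_{2j+1}$. The coefficient $\tfrac12$ in the statement is precisely the loss incurred at such a $G$-node, and is in any case more than enough.

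The main obstacle is the joint handling of the weight-$m_{2j+1}$ special node and the $G$-node: one must show rigorously that the coding function prevents a special functional from aligning with the $y$-halves (or the $z$-halves) alone, so that its resonant part is genuinely symmetric, while quantifying that every mismatched special part and every $G$-separated pair is negligible, and then book-keeping the error terms so that their sum does not exceed $2\cdot 31/m_{2j+1}$. The remaining cases follow the pattern already established in Section~2.
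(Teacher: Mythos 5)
Your opening reduction is valid and is, in substance, the same mechanism the paper uses: the paper builds $g$ by tree-surgery (Lemma~\ref{newtree}), replacing each resonant piece $f_k^{\al_k}$ by $f_k$, which works precisely because $\abs{f_k^{\al_k}}=\abs{f_k}$; your sign-matched $g$ maximizes $g(z)$ among all $g$ with $\abs{g}=\abs{f}$, so it subsumes that step, and the passage to a positive $h=\abs{f}$ with a positive tree-analysis is a clean way to phrase it. The overall architecture you then describe — generic contributions via Proposition~\ref{eRIS}, off-diagonal special terms via Corollary~\ref{spRIS}, symmetric treatment of the resonant part, separate treatment of $G$-split pairs — is also the paper's. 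The problem is that the two quantitative claims on which your sketch rests are respectively unproved and false as stated, and they are exactly where the proof lives.

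First, the $G$-split pairs are neither few nor cheap per node, and the Schreier condition alone does not make them negligible. For a $G$-node $\al$ with data $q_\al\leq n_1^\al<\dots<n_{q_\al}^\al$, the number of split pairs is bounded only by $q_\al$, which can be as large as $\maxsupp x_{k_0}\leq m_{2j_{k_0+1}}$ ($k_0$ the first split index) — astronomically large. What is actually true (the paper's estimate \eqref{eq:10}) is a per-node bound of about $7$, obtained by combining the Schreier bound $q_\al\leq \maxsupp x_{k_0}$ with the coefficient decay $m_{2j_k}/n_{2j_k}$ of the dependent sequence; smallness of the total then comes only after multiplying by the outer average $m_{2j+1}/n_{2j+1}$ \emph{and} bounding the number of relevant $G$-nodes by $n_{2j+1}^{1/3}$ (Lemma~\ref{l1}). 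That counting lemma applies only to nodes above a resonant $\al_k$ with $\prod_{\beta\prec\al_k}w(f_\beta)<m_{2j+1}$, so it forces the depth stratification of resonant special nodes (the paper's $I_1$ versus $I_2$, conditions C and $C_1$), which is entirely absent from your sketch; note also that a perfectly matching special node buried deep in the tree is not covered by your ``weight smaller than $m_{2j+1}$'' case — it is killed by the weight product above it, which is the $I_2$ argument. Without these two ingredients your error bookkeeping cannot close. Second, the factor $\tfrac12$ does not come from the $G$-operation: its coefficient $\tfrac12$ multiplies the $y$-half and the $z$-half of every surviving pair equally, creating no asymmetry between $h(\abs{y})$ and $h(\abs{z})$. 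The real source is the comparison \eqref{eq:16}: on a pair surviving all projections, $\abs{f_k}$ can act on $\abs{y_{k,i}}$ with efficiency up to $\nrm{y_{k,i}}\leq 2$ (a $2$-$\ell_1$-average), while the guaranteed action on the $z$-half is $z^*_{k,i}(z_{k,i})=1$; accordingly, your claim that a matching special node ``takes the same value'' on $\abs{y}$ and $\abs{z}$ is only a two-sided comparison within a factor $2$, not an equality.
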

\begin{proof}
Let $(\fa)_{\al\in\mc{A}}$ be a tree analysis  of $f$. Set $I_{1}$ to be the set  of $k\in\{1,\dots,n_{2j+1}\}$ such that there exists $\al_{k}\in\mc{A}$ with $ w(f_{\al_{k}})=m_{2j+1}^{-1}$, $f_{\al_{k}}=E_{\al_{k}}\frac{1}{m_{2j+1}}\sum_{i=1}^{n_{2j+1}}f^{\al_{k}}_{i}$ be a special functional satisfying
\begin{enumerate}
\item[A)] $\ran(x_k)\subset  E_{\al_{k}}$.
\item[B)] $\vvert[f_{k}^{\al_{k}}]=\vvert[f_{k}]$.
\item[C)] $\prod_{\beta\prec\al_{k}}w(\fb)<m_{2j+1}$.
\end{enumerate}
We define  $I_{2}$ as $I_{1}$ with the exception that C) is replaced by
\begin{enumerate}
\item[$C_1)$]  $\prod_{\beta\prec\al_{k}}w(\fb)\geq  m_{2j+1}$.
\end{enumerate}
The complement of $I_{1}\cup I_{2}$ is denoted by $I_{3}$. Set
$$
w_{i}=\frac{m_{2j+1}}{n_{2j+1}}\sum_{k\in I_{i}}x_{k}\,\,\,\textrm{for}\,i=1,2,3.
$$
We shall estimate $f$ on $w_{1},w_{2},w_{3}$.
\begin{lemma}
The following holds
\begin{equation}
  \label{eq:9}
 \vvert[f(w_3)]<\frac{24}{m_{2j+1}}.
\end{equation}
\end{lemma}
\begin{proof}
By Proposition~\ref{eRIS} it is enough to show that for every
$\al\in\mc{A}$ with $w(\fa)=m_{2j+1}$ and every interval $E$ the
following holds
  \begin{equation}
    \label{eq:15}
\vvert[\fa(\sum_{k\in I_{3}\cap E}x_{k})]<6(1+\#E/m_{2j+1}^{2}).
  \end{equation}
  Note that  if $k\in I_{3}$, $\al\in\mc{A}$ with $\fa=m_{2j+1}^{-1}E_{\al}\sum_{k=1}^{n_{2j+1}}f_{k}^{\al}$ and
 $\ran(\fa)\cap\ran(x_{k})\ne\emptyset$
$$
\mbox{either $\vvert[f_{k}^{\al}]\ne\vvert[f_{k}]$
or $\vvert[f_{k}^{\al}]=\vvert[f_{k}]$ and
$\ran(x_{k})\nsubseteq  E_{\al}$.}
$$
Let $k_{0}=\min\{k:\ran(\fa)\cap\ran(x_{k})\ne\emptyset\}$ and $k_{1}=\min \Sigma$ where
\begin{center}
$  \displaystyle{\Sigma=\{k\leq n_{2j+1}: \vvert[f_{k}^{\al}]\ne
\vvert[f_{k}]\,\,\textrm{and}\, \ran(\fa)\cap\ran(x_{k})\neq
\emptyset\}}$
\end{center}
If $\Sigma=\emptyset$ it follows that
$\ran(\fa)\cap\ran(x_{k})\ne\emptyset$ for at most two $k$  and
hence \eqref{eq:15} holds. For every $k>k_{1}$ we have that
$w(f^{\al}_{i})\ne m_{2j_{k}}$ for every $i$.
Corollary~\ref{spRIS}  yields that
\begin{equation}\label{eq:3}
  \vvert[\fa(\sum_{k> k_{1}}x_{k})]\leq \frac{1}{m_{2j+2}^{2}m_{2j+1}}.
\end{equation}
For all $k_{0}<k<k_{1}$, $k\in (I_{1}\cup I_{2})$ except maybe for $k_{0}$. Indeed assume that some  $k_{0}<k<k_{1}$ is not in $(I_{1}\cup I_{2})$. Then by the definition of $k_{1}$ it follows that $\vvert[f_{k}^{\al}]=\vvert[f_{k}]$ and $\ran(x_{k})\nsubseteq \ran(E_{\al})$, which yields a contradiction since  $k\in I_3$ and $\ran(f_{k}^{\al})=\ran(f_{k})\subset\ran(x_{k})\subset\ran (E_{\al})$.

Thus for  $k_{0}$ (or  $k_{1})$  there exists at most one $i$ with $w(f^{\al}_{i})=m_{2j_{k_{0}}}$  (or $w(f^{\al}_{i})=m_{2j_{k_{1}}}$),  hence
\begin{equation}
  \label{eq:8}
\vvert[\fa(x_{k_{0}}+x_{k_{1}})]\leq \frac{12}{m_{2j+1}}+\frac{1}{m_{2j+1}m_{2j+2}^{2}}.
\end{equation}
To finish the proof note that \eqref{eq:3} and \eqref{eq:8} yield \eqref{eq:15}.
\end{proof}
\begin{lemma} The following holds
\begin{equation}
  \label{eq:12}
\vvert[f(w_{2})]=\vvert[f(\frac{m_{2j+1}}{n_{2j+1}}\sum_{k\in I_{2}}x_{k})]\leq\frac{6}{m_{2j+1}}.
\end{equation}
\end{lemma}
\begin{proof}
Note that for $k\in I_{2}$ it holds that $\ran(f_{k})\subset\ran(x_{k})$  and $\vvert[f_{k}^{\al_{k}}]=\vvert[f_{k}]$.
It follows
$$
\vvert[f_{\al_{k}}(\frac{m_{2j+1}}{n_{2j+1}}x_{k})]
=\frac{1}{m_{2j+1}}\frac{m_{2j+1}}{n_{2j+1}}\vvert[f_{k}(x_{k})]\leq\frac{6}{n_{2j+1}}.
$$
Property $C_{1})$ implies that
$\vvert[f(\frac{m_{2j+1}}{n_{2j+1}}x_{k})]\leq\frac{6}{n_{2j+1}m_{2j+1}}$.
Summing over $I_{2}$ we obtain \eqref{eq:12}.
\end{proof}
It remains to estimate $f$ on $w_{1}$. We shall consider a partition of $w_{1}$ into three vectors which is imposed by the  $G$-functionals.

Let $\al\in\mc{A}$ such that $\al\prec\al_{k}$ for some $k\in I_{1}$ and $\fa=\frac{1}{2}S\fb$ is $G$-functional determined  by the intervals $q_{\al}\leq n_{1}^{\al}<\dots<n_{q_{\al}}^{\al}$.  We set
\begin{align*}
L_{1}^{\al}&=\{(k,i):k\in I_{1}, \al\prec\al_{k}\,\textrm{and exists}\,
d\leq q_{\al}\,\textrm{with}\, n_{d}^{\al}\in\ran(y_{k,i}+z_{k,i})\,\textrm{for some}\, i\},
\\
L_{2}^{\al}&=\{(k,i):k\in I_{1}, \al\prec\al_{k}\,\textrm{and
exists}\, d\leq
q_{\al}/2\,\textrm{with}\,\ran(y_{k,i}+z_{k,i})\subset(n_{2d}^{\al},n_{2d+1}^{\al})\},
\\
L_3^{\al}&=\{(k,i): k\in I_{1},\al\prec\al_{k}\,\,\textrm{and}\,\,
(k,i)\not\in L_{1}^{\al}\cup L_{2}^{\al}\}.
\end{align*}
We also set
$$\Gamma=\{\al\in\mc{A}:\al\prec\al_{k}\,\textrm{for some $k\in I_{1}$ and $\fa$ is $G$-functional}\},
$$
and $L_{i}=\cup_{\al\in \Gamma}L_{i}^{\al}$ for $i=1,2,3$. Without loss of generality we can assume that these sets define a partition of the whole vector $w_1$.
\begin{remark}\label{u3}
An easy inductive argument yields that for every $(k,i)\in L_{3}$
and every $\al\prec\al_{k}$ we have that
$\supp(y_{k,i}+z_{k,i})\cap\supp(\fa)=\supp(y_{k,i}+z_{k,i})\cap\supp(f_{\al_{k}})$.

Indeed, for a $G$-functional $\fa$ the above follows from the
definition of the set $L_{3}^{\al}$, as $\fa$ has one successor
$\fb$ and satisfies
$\fa(y_{k,i}+z_{k,i})=\frac{1}{2}\fb(x_{k,i}+z_{k,i})$.

If $\fa$ is weighted functional then there exists a unique $\be\in S_{\al}$ such that $\supp(y_{k,i}+z_{k,i})\subset\supp (\fb)$ and $\fa(y_{k,i}+z_{k,i})=\frac{1}{w(\fa)}\fb(x_{k,i}+z_{k,i})$.
\end{remark}
The following lemma give us an upper bound for the cardinality of the
set $\Gamma$.
\begin{lemma}\label{l1}
  \label{slenght}
Let $(\fa)_{\al\in\mc{A}}$ be a tree analysis of a functional. Let
$$
B=\{\al\in\mc{A}:\prod_{\beta\precneqq\alpha}w(\fb)<m_{2j+1}\}
$$
Then $\# B\leq (5n_{2j})^{\log_{2}(m_{2j+1})-1} \leq n_{2j+1}^{1/3}$.
\end{lemma}
For the proof  we refer to the proof of  Lemma II.9, \cite{ATod}.

The sets $L_{1}^{\al}, L_{2}^{\al}$ and  $L_3^{\al}$   implies the following partition of $w_{1}$.
$$
u_{i}=\frac{m_{2j+1}}{n_{2j+1}}\sum_{k\in I_{1}} \frac{m_{2j_{k}}}{n_{2j_{k}}}\sum_{(k,i)\in L_{i}}(y_{k,i}+z_{k,i}),\,\,i=1,2,3.
$$
\begin{lemma}
  \label{u2}
We have that $f(u_{2})=0$.
\end{lemma}
\begin{proof}
  Let $(k,i)\in L_{2}^{\al}$ for some $\al$. From the definition of  $L_{2}^{\al}$ it follows that $\supp (\fb)\cap \supp (x_{k,i})=\emptyset$ for every $\be\preceq\alpha$. It follows that $f(u_{2})=0$.
\end{proof}
\begin{lemma}
  It holds that
  \begin{equation}
 \label{eq:10}
\vvert[f(u_{1})]=\vvert[f(\frac{m_{2j+1}}{n_{2j+1}}
  \sum_{k\in I_{1}}\frac{m_{2j_{k}}}{n_{2j_{k}}}\sum_{i:(k,i)\in L_{1}}(y_{k,i}+z_{k,i}))]\leq
\frac{1}{m_{2j+1}^{2}}.
\end{equation}
\end{lemma}
\begin{proof}
Let $\fa$ be a $G$-functional determined by the intervals $q_{\al}\leq n_{1}^{\al}<\dots<n_{q_{\al}}^{\al}$. Let $k_{0}$ be the smallest  $k$ such that $(k,i)\in L_{1}^{\al}$ for some $i\leq n_{2j_{k}}$. It follows that $q_{\al}\leq\maxsupp x_{k_{0}}$. If $k_{0}<n_{2j+1}$, as $(x_{k},x_{k}^{*})_{k}$ is a dependent sequence, we have that $q_{\al}\leq\maxsupp x_{k_{0}}\leq m_{2j_{k_{0}+1}}$. Therefore
$$
\# \{ (k,i)\in L_{1}^{\al}:k>k_{0} \}\leq m_{2j_{k_{0}+1}}.
$$
The above inequality implies the following.
\begin{align*}
\vvert[\fa(\sum_{(k,i)\in L_{1}^{\al}}\frac{m_{2j_{k}}}{n_{2j_{k}}}(y_{k,i}+z_{k,i}))]
 &=
\sum_{k\in I_{1}}
\vvert[\fa(\frac{m_{2j_{k}}}{n_{2j_{k}}}\sum_{i:(k,i)\in L_{1}^{\al} }(y_{k,i}+z_{k,i}))]
 \\
 &\leq \norm[x_{k_{0}}]+\sum_{k>k_{0}}6m_{2j_{k_{0}+1}}\frac{m_{2j_{k}}}{n_{2j_{k}}}\leq
6+\frac{1}{n_{2j+2}}\leq 7.
\end{align*}
Since for every $k\in I_{1}$ we have $\prod_{\be\prec\al_{k}}w(\fb)<m_{2j+1}$,  Lemma~\ref{l1} yields that  $\#\Gamma\leq (5n_{2j})^{\log_{2}(m_{2j+1})-1}\leq n_{2j+1}^{1/3}$. Therefore
\begin{align*}
\vvert[f(u_{1})]\leq
    &
\frac{m_{2j+1}}{n_{2j+1}} \sum_{\al\in \Gamma}
\vvert[\fa(\sum_{(k,i)\in L_{1}^{\al}}\frac{m_{2j_{k}}}{n_{2j_{k}}}(y_{k,i}+z_{k,i}))]
 \\
 &\leq \frac{m_{2j+1}}{n_{2j+1}}7n_{2j+1}^{1/3}\leq\frac{1}{m^{2}_{2j+1}}.
 \end{align*}
\end{proof}
It remains to estimate the action of $f$ on $u_{3}$.
\begin{lemma}
  \label{u3-lemma}
 Let $y_3=u_3|_{\supp y}$ and $z_{3}=u_3|_{\supp z}$. There exist a functional $g\in W$ with $\vvert[g]=\vvert[f]$ satisfying
 \begin{equation}
   \label{eq:17}
g(z_{3})\geq \frac{1}{2}f(y_{3}).
 \end{equation}
\end{lemma}
Recall that  for  every $k\in I_{1}$ it holds that $\vvert[f_{k}^{\al_{k}}]=\vvert[f_{k}]$ and $\ran(x_{k})\subset E_{\al_{k}}$. Also since $\prod_{\beta\prec\al_{k}}w(\fb)<m_{2j+1}$  it follows that the nodes $\al_{k},\al_{l}$ are incomparable for $k\ne l\in I_{1}$.

Let $g$ be the functional defined by a tree-analysis we obtain by replacing each $f_{k}^{\al_{k}}$ by $f_{k}$ for every $k\in I_{1}$.  Lemma ~\ref{newtree}  yields that the resulting functional is a norming functional.

Setting $y_{3,k}=u_{3}|_{\supp y_k}$ and $z_{3,k}=u_3|_{\supp z_k}$ and using that $\vvert[f_{k}^{\al_{k}}]=\vvert[f_{k}]$ we have the following
\begin{equation}
  \label{eq:16}
 \vvert[f_{k}^{\al_{k}}(y_{3,k})]\leq 2f_{k}(z_{3,k})
\end{equation}
Remark~\ref{u3} yields that for every $\gamma\prec \al_{k}$ we have that
$$
 \vvert[f_{\gamma}(y_{k,3})]\leq
\left(\prod_{\gamma\preceq\delta\preceq \al_{k}}w(f_{\delta})^{-1}\right)\vvert[f_{k}^{\al_{k}}(y_{k,3})].
$$
Lemma~\ref{newtree} yields also that ${\displaystyle g_{\gamma}(z_{3,k})=(\prod_{\gamma\preceq\delta\preceq \al_{k}}w(f_{\delta})^{-1})f_{k}(z_{3,k}) }$ for every $\gamma\prec\al_{k}$. Therefore
\begin{equation}
  \label{eq:2}
  \vvert[f_{\gamma}(y_{3,k})]\leq
\left(\prod_{\gamma\preceq\delta\preceq \al_{k}}w(f_{\delta})^{-1}\right)\vvert[f_{k}^{\al_{k}}(y_{3,k})]\leq 2g_{\gamma}(z_{3,k}).
\end{equation}
The above inequality proves \eqref{eq:17}.
\end{proof}
Combining \eqref{eq:9}, \eqref{eq:12}, \eqref{eq:10} and \eqref{eq:17} we obtain that
$$
g(z)\geq \frac{1}{2}f(y)-2\frac{31}{m_{2j+1}}.
$$
\begin{theorem}
 The space $\mc{X}_{(4)}$ is quasi-minimal.
\end{theorem}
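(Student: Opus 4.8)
The plan is to deduce quasi-minimality from the single-pair comparison of Proposition~\ref{qm1} by a globalization argument: I will show that any two block subspaces $Y$ and $Z$ of $\mc{X}_{(4)}$ contain equivalent block sequences, which yields an isomorphism between the corresponding block subspaces and hence quasi-minimality.

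First I would fix a rapidly increasing sequence of odd indices $(2j_n+1)_n$ with $m_{2j_n+1}\to\infty$ as fast as needed, and for each $n$ construct a $(6,2j_n+1)$-dependent sequence $(x_k^{(n)},f_k^{(n)})_{k=1}^{n_{2j_n+1}}$ exactly as in Proposition~\ref{exexact}, except that the normalized RIS defining each exact pair is chosen with successive elements alternating between $\ell_1$-averages of $Y$ and of $Z$. This is possible by Proposition~\ref{ris}, Lemma~\ref{esaverage} and the fact that $Y,Z$ are infinite dimensional block subspaces, since at each step one may take the next RIS vector from whichever of $Y,Z$ is required and supported far enough to the right to meet the RIS growth condition; along the way the coding $\sigma$ is satisfied as in the construction of a special sequence. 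This produces precisely the splitting $x_k^{(n)}=y_k^{(n)}+z_k^{(n)}$ with $y_{k,i}^{(n)}\in Y$ and $z_{k,i}^{(n)}\in Z$. Setting
$$
y^{(n)}=\frac{m_{2j_n+1}}{n_{2j_n+1}}\sum_{k}y_k^{(n)}\in Y,\qquad
z^{(n)}=\frac{m_{2j_n+1}}{n_{2j_n+1}}\sum_{k}z_k^{(n)}\in Z,
$$
and arranging $\ran(y^{(n)}+z^{(n)})<\ran(y^{(n+1)}+z^{(n+1)})$, I obtain a block sequence $(y^{(n)})_n$ in $Y$ and a block sequence $(z^{(n)})_n$ in $Z$, each normalized up to a constant.

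Next I would prove $(y^{(n)})_n\sim(z^{(n)})_n$. By $1$-unconditionality it suffices to treat nonnegative scalars $(a_n)$ and to bound $\norm[\sum_n a_nz^{(n)}]$ below by $\tfrac12\norm[\sum_n a_ny^{(n)}]$ up to a small error, together with the symmetric estimate obtained by interchanging $Y$ and $Z$. Fix $f\in W$ norming $\sum_n a_ny^{(n)}$, with tree analysis $(\fa)_{\al\in\mc{A}}$. The transformation performed in the proof of Proposition~\ref{qm1} - locating inside $\ran(y^{(n)}+z^{(n)})$ the special nodes $\al_k$ with $\vvert[f_k^{\al_k}]=\vvert[f_k^{(n)}]$ and replacing $f_k^{\al_k}$ by $f_k^{(n)}$ - takes place entirely within $\ran(y^{(n)}+z^{(n)})$, and for different $n$ these nodes lie over disjoint ranges and are therefore pairwise incomparable. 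Hence Lemma~\ref{newtree} lets me perform all the replacements at once, producing a single $g\in W$ with $\vvert[g]=\vvert[f]$ whose restriction to each $\ran(y^{(n)}+z^{(n)})$ is the functional delivered by Proposition~\ref{qm1} for the $n$-th pair. Summing the estimate \eqref{eq:1} over $n$ gives
$$
g\Big(\sum_n a_nz^{(n)}\Big)\ge \tfrac12 f\Big(\sum_n a_ny^{(n)}\Big)-\sum_n a_n\frac{62}{m_{2j_n+1}}.
$$
Since $\norm[\sum_n a_ny^{(n)}]\ge \max_n a_n$ (by $1$-unconditionality and $\norm[y^{(n)}]\ge 1$) while $\sum_n 62/m_{2j_n+1}$ is as small as we like, the error is dominated by a small multiple of $\norm[\sum_n a_ny^{(n)}]=f(\sum_n a_ny^{(n)})$, giving $\norm[\sum_n a_nz^{(n)}]\ge(\tfrac12-o(1))\norm[\sum_n a_ny^{(n)}]$. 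The symmetric construction yields the reverse domination, so $(y^{(n)})_n\sim(z^{(n)})_n$ and $\overline{\spa}(y^{(n)})\subset Y$ is isomorphic to $\overline{\spa}(z^{(n)})\subset Z$, proving quasi-minimality.

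The main obstacle is precisely this globalization step. Proposition~\ref{qm1} is a single-pair statement carrying one additive error $2\cdot 31/m_{2j+1}$, and to upgrade it to equivalence of the full sequences one must (i) check that the special nodes $\al_k$ attached to the different dependent sequences are genuinely incomparable - this uses the constraint $\prod_{\be\prec\al_k}w(\fb)<m_{2j_n+1}$ of condition C), which keeps these nodes shallow and local to each range - so that Lemma~\ref{newtree} really returns an element of $W$ with $\vvert[g]=\vvert[f]$; and (ii) control the accumulated error $\sum_n a_n\,62/m_{2j_n+1}$ against the norm of the combination, which is what forces the rapid growth of $(m_{2j_n+1})_n$. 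The remaining technical point, verifying that the RIS alternating between $Y$ and $Z$ still yields a legitimate dependent sequence meeting the coding requirement, is handled exactly as in Proposition~\ref{exexact}.
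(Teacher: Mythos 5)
Your proposal is correct and takes essentially the same route as the paper's own proof: one builds, for a fast-growing sequence of odd weights, dependent sequences whose exact pairs split alternately into vectors of $Y$ and of $Z$, and then runs the analysis of Proposition~\ref{qm1} simultaneously for all pairs with a single norming functional $f$, using Lemma~\ref{newtree} once (the replaced functionals $f_k^{\al_k}$ live inside the pairwise disjoint ranges of the $x_k^{(n)}$, so the replaced nodes are incomparable) and then summing the per-pair estimates while absorbing the accumulated error $\sum_n \vvert[a_n]\,62/m_{2j_n+1}$ via the uniform lower bound on $\norm[y^{(n)}]$ and the rapid growth of the weights. This globalization, together with the symmetric estimate interchanging $Y$ and $Z$, is precisely what the paper does.
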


\begin{proof}

The proof is based  on the arguments we use  in the proof of Proposition~\ref{qm1}.

Let $Y,Z$ be two block subspaces of $\mc{X}_{(4)}$. Inductively, by Proposition \ref{exexact}, we choose a  a sequence $(x_{l},x^{*}_{l})_{l\in\N}$ such that  $(x_{l},x_{l}^{*})$ is a $(2j_{l}+1)$-dependent sequence, $3<j_{l}\nearrow +\infty$, which splits as in the first step i.e.
$$
x_{l}=y_{l}+z_{l}=\frac{m_{2j_{l}+1}}{n_{2j_{l}+1}} \sum_{k=1}^{n_{2j_{l}+1}/2}y_{l,k}+z_{l,k}
$$
where
${\displaystyle y_{l,k}+z_{l,k}=\frac{m_{2j_{k,l}}}{n_{2j_{l,k}}} (y_{l,k,1}+z_{l,k,1}+\dots+y_{l,k,n_{2j_{l,k}}}+z_{l,k,n_{2j_{l,k}}}) }$
and additionally  $y_{l,k,i}\in Y$, $z_{l,k,i}\in Z$.

We may also assume that the weights that appear in the choice of
the dependent sequence $(x_{l},x_{l}^{*})$ are bigger than the
weights we use in $(x_{l-1},x_{l-1}^{*})$.

Let $\norm[\sum_{l}a_{l}y_{l}]=1$ and let $f\in W$ with $f(\sum_{l}a_{l}y_{l})>1/2$. Let $(\fa)_{\al\in\mc{A}}$ be a tree-analysis  of $f$.

We define for every $l\in\N$ the set  $I_{l,1}, I_{l,2}, I_{l,3}$ as in Proposition~\ref{qm1}.  For each of the sets $I_{l,1}$, $I_{l,2}$ we get
$$
f(\sum_{k\in I_{l,3}}(y_{l,k}+z_{l,k}))\leq\frac{24}{m_{2j_{l}+1}}
$$
and
$$
f(\sum_{k\in I_{l,2}}(y_{l,k}+z_{l,k}))\leq\frac{6}{m_{2j_{l}+1}}
$$
For the sets $I_{l,1}$ as in Proposition~\ref{qm1} we define the sets $L_{l,1}, L_{l,2}$ and $L_{l,3}$ and vectors $u_{l,1}, u_{l,2}, u_{l,3}$. As before we obtain that $f( u_{l,2})=0$ and $f(u_{l,1})$ is dominated by $m_{2j_{l}+1}^{-1}$.

For the sets $L_{l,1}$ we  work as in Lemma~\ref{u3-lemma} substituting $f_{l,k}^{\al_{k}}$ (which corresponds to $f_{k}^{\al_{k}}$ of Lemma~\ref{u3-lemma}) by the functional $f_{l,k}$ (which corresponds to $f_{k}$ of Lemma~\ref{u3-lemma}).   In this way we get a functional $g$ such that $\vvert[g]=\vvert[f]$ and
$$
g(\sum_{l}a_{l}u_{l,3}{|}_{\supp z_l})\geq\frac{1}{2} f(\sum_{l}a_{l}u_{l,3}{|}_{\supp z_l}).
$$
The above yields that
$$
g(\sum_{l}a_{l}z_{l})\geq
\frac{1}{2}f(\sum_{l}a_{l}y_{l})- 2\sum_{l}\vvert[a_{l}] \frac{31}{m_{2j_{l}+1}}\geq \frac{1}{5}.
$$
which ends the proof. \end{proof}
\section{Tightness by range}
We show now, using $G$-operations, the following
\begin{theorem}
 The space $\mc{X}_4$ is tight by range.
\end{theorem}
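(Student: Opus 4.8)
The plan is to use the Ferenczi--Rosendal reformulation of tightness by range \cite{FR,FR2}: it is enough to fix a normalized block sequence $(x_k)_k$, put $I_k=\ran(x_k)$, and prove that for every infinite $A\subset\N$ the space $[x_k:k\in\N]$ does not embed isomorphically into $X_A:=[e_i : i\notin\bigcup_{k\in A}I_k]$. I would argue by contradiction, assuming an isomorphic embedding $T\colon[x_k:k\in\N]\to X_A$ with $\lambda^{-1}\norm[v]\le\norm[Tv]\le\lambda\norm[v]$. Restricting attention to the blocks indexed by $A$ and applying a standard small perturbation, I may assume that $(Tx_k)_{k\in A}$ is again a block sequence and that the two families $(\ran(x_k))_{k\in A}$ and $(\ran(Tx_k))_{k\in A}$ are interleaved, the latter lying entirely in the gaps left by the former.

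Next I would transport the machinery of Sections~2 and~3 into this setting. Using Propositions~\ref{ris} and~\ref{exexact} I build, inside $[x_k:k\in A]$, a $(6,2j+1)$-dependent sequence $(w_i,w_i^*)_{i=1}^{n_{2j+1}}$ whose exact pairs are blocks of $(x_k)_{k\in A}$, and form the average $w=\frac{m_{2j+1}}{n_{2j+1}}\sum_i w_i$ together with its special functional $\phi=\frac{1}{m_{2j+1}}\sum_i w_i^*$. Since $(w_i^*)$ is a genuine $(2j+1)$-special sequence, $\phi\in W$ and $\phi(w)\approx1$, so $\norm[w]\gtrsim1$. The image $Tw=\frac{m_{2j+1}}{n_{2j+1}}\sum_i Tw_i$ lies in $X_A$, with each $\ran(Tw_i)$ contained in the gaps between the $\ran(x_k)$. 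The goal is then to show that $\norm[Tw]\lesssim 1/m_{2j+1}$, which together with $\norm[Tw]\ge\lambda^{-1}\norm[w]\gtrsim\lambda^{-1}$ yields a contradiction once $j$ is large.

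To estimate $\norm[Tw]=\sup_{g\in W}\vvert[g(Tw)]$ I would fix $g\in W$, take a tree-analysis of $g$, and re-run the decomposition of Proposition~\ref{qm1}: split the index set according to whether a node $\al$ carrying a $(2j+1)$-special functional matches the weights and coding of the $Tw_i$ (sets $I_1,I_2,I_3$), and further split the $I_1$-part by how the Schreier intervals of the $G$-functionals $\fa=\tfrac12 S_{F_\al}\fb$ occurring in the analysis meet the interleaved ranges (sets $L_1,L_2,L_3$ and the family $\Gamma$). The decisive point is that, because $\ran(Tw_i)$ avoids $\bigcup_{k\in A}\ran(x_k)$, the $G$-operation of $\zeta)$ is exactly what prevents a functional from concentrating on $Tw$: a $G$-functional whose intervals catch the image blocks must pay the factor $\tfrac12$ and cannot simultaneously be aligned with the $Y$-ranges that fix the coding $\sigma$, so the special weights seen by $Tw$ are mismatched. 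Feeding this into Corollary~\ref{spRIS} and the two cases of Proposition~\ref{eRIS}, together with the bound $\#\Gamma\le n_{2j+1}^{1/3}$ from Lemma~\ref{l1}, the contributions of $w_2,w_3$ and of $u_1,u_2$ become negligible (as in \eqref{eq:9}, \eqref{eq:12}, \eqref{eq:10} and Lemma~\ref{u2}), and the surviving term is of order $1/m_{2j+1}$; this is the quantitative heart of the argument and the place where the presence of $\zeta)$, absent in $\mathfrak{X}_2$, is used, in accordance with Remark~(iii).

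The hard part will be twofold. First, $Tw$ is only the image under $T$ of a dependent-sequence average and is not manifestly one itself, so the exact-pair and RIS upper estimates of Proposition~\ref{eRIS} and Corollary~\ref{spRIS} must be re-established for the displaced blocks $Tw_i$; this forces a careful choice of the $w_i$ (and a passage to a far subsequence with rapidly growing weights, as in the proof that $\mc{X}_4$ is quasi-minimal) so that the images remain rapidly increasing. Second, the combinatorial bookkeeping of the Schreier intervals $[n_{2p-1},n_{2p})$ against the two interleaved families of ranges, while preserving the weight and coding constraints of the $(2j+1)$-special sequences after each chopping $S_{F_\al}$, is delicate; controlling the boundary blocks where an interval cuts across a single $\ran(x_k)$ (the analogues of $k_0,k_1$ in the proof of \eqref{eq:9}) is the subtle step, and it is precisely there that range-disjointness, rather than mere support-disjointness, must be invoked.
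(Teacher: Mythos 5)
Your global strategy matches the paper's (contradiction via an embedding $T$ whose images live in the gaps, a dependent sequence built inside $[x_k]$, a lower bound $\approx \frac12$ for the average $w$ via its special functional, and an upper bound $\lesssim 1/m_{2j+1}$ for $\|Tw\|$), but there is a genuine gap at the quantitative heart, precisely where the $G$-operation must enter. In the paper the $G$-operation is used \emph{constructively, on the norming side}: a functional $f_0\in W$ norming an $\ell_1^{n_{2j}}$-average $w$ of the $x_k$'s may have support spilling into the gaps between the $\ran(x_k)$ (recall $W$ is closed only under \emph{interval} projections, not projections onto unions of intervals), so the paper replaces it by the $G$-functional $f=\frac{1}{2}\sum_k E_{2k-1}f_0\in W$ with $E_{2k-1}=\ran(x_{r(j)+2k-1})$; this still gives $f(w)\geq \frac12$ and, crucially, $\supp f\cap\supp(Tx_i)=\emptyset$ for all $i$ (the ``special pair disjoint from $(Tx_i)$'', condition P2). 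The dependent sequence is assembled only from such cleaned functionals. Then the coding together with the unconditionality of $W$ forces \emph{every} competing special functional $h=E\frac{1}{m_{2j+1}}\sum_r h_r$ to satisfy: for $i$ below the splitting index $i_0$ one has $|h_i|=|f_i|$, hence $h_i$ annihilates the image vectors too, while for $i>i_0$ the weights are mismatched and Corollary~\ref{spRIS} applies. This verifies the hypothesis of the ``Moreover'' part of Proposition~\ref{eRIS} and yields $\|\frac{m_{2j+1}}{n_{2j+1}}\sum_i Tu_i\|\leq 25/m_{2j+1}$.

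Your proposal contains no analogue of this step: you build the dependent sequence from arbitrary exact pairs in $[x_k:k\in A]$, and you assign the $G$-operation a \emph{defensive} role, claiming that a $G$-functional appearing in the tree-analysis of an adversarial $g$ ``must pay the factor $\frac12$ and cannot be aligned with the coding''. That mechanism cannot work: property $\zeta)$ only \emph{adds} functionals to $W$, so it can never prevent a functional from acting on $Tw$ --- the adversary is free to avoid $G$-functionals entirely. Without the disjointness P2 there is no control whatsoever on how weight-$m_{2j+1}$ functionals act on the displaced blocks: the very special functional $\phi=\frac{1}{m_{2j+1}}\sum_i f_i$ of your dependent sequence may have each $f_i$ supported partly in the gaps with $f_i(Tu_i)\approx 1$, in which case $\phi(Tw)\approx 1$ and the needed bound $\|Tw\|\lesssim 1/m_{2j+1}$ is simply false for your choice of $w_i$. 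Relatedly, re-running the $I_1,I_2,I_3$ and $L_1,L_2,L_3$ decomposition of Proposition~\ref{qm1} is the wrong tool here: that argument produces a \emph{comparison} ($g(z)\geq\frac12 f(y)-\cdots$ for a newly built $g$ with $|g|=|f|$), whereas tightness requires an \emph{absolute} upper bound on $\sup_{g\in W}|g(Tw)|$; the paper's tightness proof does not use that decomposition at all, only the RIS estimates of Proposition~\ref{eRIS} and Corollary~\ref{spRIS} combined with P2.
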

\begin{proof} Let $(x_{i})$ be normalized block sequence. We show that there  exists  no bounded operator  $T$ such that  $\supp T(x_{i})\cap\ran(x_{i})=\emptyset$ and  $T$ can be extended  to an isomorphism from $[(x_{i})]$  to $X$. This will prove that $\mc{X}_{(4)}$ is tight by range.

Let $T$ be an operator as above and assume without loss of generality that $\norm[T]\leq 1$. By the reflexivity of the space and passing to a subsequence we may assume that $(T(x_i))_{i}$  is a block sequence and moreover
$$
\ran(x_{i}+Tx_{i})<\ran(x_{i+1}+Tx_{i+1})\,\, \forall i\in\N.
$$
Let  $j\in\N$.  By Lemma~\ref{esaverage} we can  choose $w=\sum_{k=1}^{l(n_{2j})}a_{k}x_{r(j)+k}$ such that  $w$  is an $2-\ell^{n_{2j}}$-average and $l(n_{2j})\leq w$.

Let $f_{0}\in W$ be a functional such that $f_{0}(w)\geq 1$. Without loss of generality we may assume that $\ran(f_{0})=\ran(w)$. For  $k\leq l(n_{2j})/2$ set
$$
E_{2k-1}=\ran(x_{r(j)+2k-1})\,\,\textrm{and}\,\,E_{2k}=(\maxran x_{r(j)+2k-1}, \minsupp x_{r(j)+2k+1}),
$$
It follows that $(E_{k})_{k=1}^{l(n_{2j})}$ are consecutive intervals  and $l(n_{2j})\leq E_{1}<\dots<E_{n_{2j}}$.
Then the functional $f=\frac{1}{2}\sum_{k=1}^{n_{2j}/2}E_{2k-1}f_{0}\in W$ and without loss of generality we may assume $f(w)\geq 1/2$. Otherwise we can take the restriction of $f$ to $\cup_{k} \ran(x_{r(j)+2k})$. It follows that $f$  satisfies
\begin{equation}
  \label{eq:11}
f(w)\geq \frac{1}{2}\,\,\textrm{and}\,\,\supp(Tx_{i})\cap \supp (f)=\emptyset\,\,\forall  i\in\N.
\end{equation}
\begin{definition}
We shall call the pair $(w,f)$ a $(2,n_{2j})$-special pair disjoint from $(Tx_{i})$ if $w$ is an $2-\ell_{1}^{n_{2j}}$-average and \eqref{eq:11} holds.
\end{definition}
Let $j\in\N$.  Inductively, by Proposition~\ref{exexact}, we choose a $(6,n_{2j+1})$-dependent sequence $((u_{i},f_{i}))_{i\leq n_{2j+1}}$ such that
\begin{enumerate}
\item[P1)] $(u_{i},f_{i})$ is a $(6,\e_{i})$-exact pair of the form 
  $u_{i}=\frac{m_{2j_{i}}}{n_{2j_{i}}}\sum_{k=1}^{n_{2j_{i}}}u_{i,k}$ and
$f_{i}=\frac{1}{m_{2j_{i}}}\sum_{k=1}^{n_{2j_{i}}}f_{i,k}$ for any $i$,
\item[P2)]  $(u_{i,k},f_{i,k})$  is  a $(2,n_{2j_{i,k}})$-special pair disjoint from $(Tx_{i})_{i}$ for every $i,k$,
 \item[P3)] $(f_{1},\dots,f_{n_{2j+1}})$ is a $(2j+1)$-special sequence.
\end{enumerate}
Note that
\begin{equation}
  \label{eq:21}
\norm[\frac{m_{2j+1}}{n_{2j+1}}\sum_{i=1}^{n_{2j+1}}u_{i}]\geq  \frac{1}{m_{2j+1}}
\sum_{i=1}^{n_{2j+1}}f_{i}(\frac{m_{2j+1}}{n_{2j+1}}\sum_{i=1}^{n_{2j+1}}u_{i})\geq \frac{1}{2}.
\end{equation}
Moreover $\supp(f)\cap\supp (Tx_j)=\emptyset$ for all $f=\frac{1}{m_{2j+1}}E\sum_{i=1}^{n_{2j+1}}h_{i}$ and $\vvert[h_{i}]=\vvert[f_{i}]$ for all $i$.

\begin{lemma}\label{llast}
  The following holds
  \begin{equation}
    \label{eq:18}
\norm[\frac{1}{n_{2j+1}}\sum_{i=1}^{n_{2j+1}}Tu_{i}]\leq 25m_{2j+1}^{-2}
  \end{equation}
\end{lemma}

\begin{proof}[Proof of Lemma~\ref{llast}]
Let $u_{i,k}=\sum_{j\in D_{i,k}}a_{j}x_j=
\frac{1}{n_{2j_{i,k}}}(\bar{x}_{i,k,1}+\dots\bar{x}_{i,k,n_{2j_{i,k}}})$
be a $2-\ell_{1}^{n_{2j_{i,k}}}$ average. We set
$y_{i,k}=\sum_{j\in D^{*}_{i,k}}a_{j}Tx_{j}$ where
$D^{*}_{i,k}=D_{i,k}\setminus\{\max D_{i,k}\}$.

Note that $\maxsupp y_{i,k}<\maxsupp u_{i,k}$,
$$
y_{i,k}=\frac{1}{n_{2j_{i,k}}} (\bar{y}_{i,k,1}+\dots+\bar{y}_{i,k,n_{2j_{i,k}}})
$$
and $\norm[\bar{y}_{i,k,j}]\leq \norm[T]\norm[\bar{x}_{i,k,j}]\leq
2$ for all $j$.

Set $w_{i}=\frac{m_{2j_{i}}}{n_{2j_{i}}}\sum_{k=1}^{n_{2j_{i}}}y_{i,k}$. Since
$$
\norm[Tu_{i}-w_{i}]\leq
\frac{m_{2j_{i}}}{n_{2j_{i}}}\sum_{k=1}^{n_{2j_{i}}} \norm[Tu_{i,k}-y_{i,k}]\leq
\frac{m_{2j_{i}}}{n_{2j_{i}}}\sum_{k=1}^{n_{2j_{i}}} \frac{1}{n_{2j_{i,k}}}\leq\frac{1}{m^{2}_{2j_{i}}}
 $$
 it is enough to show that
$$
\norm[\frac{1}{n_{2j+1}}\sum_{i=1}^{n_{2j+1}}w_{i}]\leq 24m_{2j+1}^{-2}
$$
To get the above inequality we shall use Proposition~\ref{eRIS}. We show that $(y_{i,k})_{k=1}^{n_{2j_{i}}}$ is a $(3,\e_{i})$-RIS.

Indeed as in Lemma~\ref{l1average} we obtain that for all $f\in W$ with $w(f)=m_{p}<m_{2j_{i,k}}$ it holds that
$$
\vvert[f(y_{i,k})]\leq\frac{3}{m_{p}}.
$$
Also since $m_{2j_{i,k+1}}^{-1}\max\supp u_{i,k}\leq \e_{i} $ and $\maxsupp y_{i,k}<\maxsupp u_{i,k}$ we get that $(y_{i,k})_{k\leq  n_{2j_{i}}}$ is a $(3,\e_{i})$-RIS. By Proposition~\ref{eRIS}  and P3) we have that $(w_{i})_{i=1}^{n_{2j+1}}$ is a $(6,n^{-1}_{2j+1})$-RIS.

 We will  show now that for every $f\in W$ with $w(f)=m_{2j+1}^{-1}$ and every interval $E$ we have that
 $$
\vvert[f(\sum_{i\in E}w_{i})] \leq 6(1+ \frac{\# E}{m_{2j+1}^{2}})
 $$
Let $f=E\frac{1}{m_{2j+1}}\sum_{r=1}^{n_{2j+1}}h_r$ be a special functional. Let $i_0=\min\{ i: \vvert[h_{i}]\ne \vvert[f_{i}] \}$. If $i_{0}>1$ it follows that for every $i<i_{0}$ $f(w_{i})=0$,  since by P2) $f(Tu_{i,k})=0$ for every $(i,k)$ with $i<i_{0}$. For every $i>i_0$ the assumptions of Corollary~\ref{spRIS} hold,  hence
\begin{equation}
  \label{eq:13}
  \vvert[f(\sum_{i>i_{0}}w_{i})]\leq\frac{1}{m_{2j+2}^{2}}
\end{equation}
For the $w_{i_0}$, $w(h_{i_{0}})=w(f_{i_{0}})$ and  hence using Corollary~\ref{spRIS} we get
\begin{equation}
  \label{eq:14}
 \vvert[f(w_{i_{0}})]\leq 6+\frac{1}{m_{2j+2}^{2}}
\end{equation}
It follows
$$
\vvert[f(\sum_{i\in E}w_{i})] \leq 6(1+ \frac{\# E}{m_{2j+1}^{2}})
$$
Proposition~\ref{eRIS} yields
\begin{equation*}
  \norm[\frac{m_{2j+1}}{n_{2j+1}}\sum_{i=1}^{n_{2j+1}}w_i]\leq\frac{24}{m_{2j+1}},
\end{equation*}
which ends the proof of the lemma.
\end{proof}
Notice that combining \eqref{eq:21} and \eqref{eq:18} we get that $T$ is not an isomorphism, which ends the proof of the theorem.
\end{proof}


\begin{thebibliography}{99}
\bibitem{AS} Androulakis, G.; Schlumprecht, Th.,'The Banach space S is complementably minimal and subsequentially prime', Studia Math. 156 (2003) no. 3, 227--242.
\bibitem{ABR} Argyros, S.A.; Beanland, K.; Raikoftsalis, Th., A weak Hilbert space with few symmetries, C. R. Math. Acad. Sci. Paris  348  (2010),  no. 23-24, 1293--1296.
\bibitem{ABR2}Argyros, S.A.; Beanland, K., Raikoftsalis, Th.,  An extremely non-homogeneous weak Hilbert space,  Transactions of the AMS, to appear.
\bibitem{ATod}Argyros, S.A.; Todorcevic, S., Ramsey methods in analysis. Advanced Courses in Mathematics. CRM Barcelona. Birkha\"{u}ser Verlag, Basel, 2005.
\bibitem{CJT} Casazza, P.; Johnson, W.B.; Tzafriri, L., On Tsirelson's space. Israel J. Math.  47  (1984),  no. 2-3, 81--98.
\bibitem{FR}
Ferenczi, V.; Rosendal, Ch., Banach spaces without minimal subspaces.  J. Funct. Anal.  257  (2009),  no. 1, 149--193.
\bibitem{FR2} Ferenczi V.; Rosendal Ch.,  Banach spaces without minimal subspaces: Examples.  Annales de l'Institut Fourier, to appear
\bibitem{FS} Ferenczi, V.; Schlumprecht, Th., Subsequential minimality in Gowers and Maurey spaces, to appear in Proc. London Math. Soc.
\bibitem{G2}Gowers, W.T.,  A solution to Banach's hyperplane problem, Bull. London Math. Soc. 26 (1994), no. 6, 523--530.
\bibitem{G}Gowers, W.T.,  A hereditarily indecomposable space with an asymptotic unconditional basis.  Geometric aspects of functional analysis, 112-120, Oper. Theory Adv. Appl., 77. Birkha\"{u}ser, Basel, 1995.
\bibitem{G3}  Gowers, W.T., 'A new dichotomy for Banach spaces', Geom. Funct. Anal. 6 (1996) no. 6, 1083--1093.
\bibitem{G4} Gowers, W.T., 'An infinite Ramsey theorem and some Banach-space dichotomies', Ann. of Math. 2 156
(2002) no. 3, 797--833.
\bibitem{GM}Gowers, W.T.; Maurey, B., The unconditional basic sequence problem.  J. Amer. Math. Soc.  6  (1993),  no. 4, 851--874.
\bibitem{GM2} Gowers, W.T.; Maurey, B., Banach spaces with small spaces of operators.  Math. Ann.  307  (1997),  no. 4, 543--568.
\bibitem{S}Schlumprecht, Th., An arbitrarily distortable Banach space.  Israel J. Math.  76  (1991),  no. 1-2, 81--95.
\end{thebibliography}
\end{document}